\newcommand{\N}{\mathbb{N}}
\newcommand{\R}{\mathbb{R}}
\newcommand{\B}{\mathbb{B}}
\newcommand{\D}{\mathbb{D}}
\newcommand{\Sph}{\mathbb{S}}
\newcommand{\sff}{\mathrm{I\!I}}
\newcommand{\Ric}{\text{Ric}}
\newcommand{\dive}{\text{div}}
\newcommand{\p}{\partial}
\newtheorem{theorem}{Theorem}[section]
\newtheorem{lemma}[theorem]{Lemma}
\newtheorem{corollary}[theorem]{Corollary}
\newtheorem{definition}[theorem]{Definition}
\newtheorem{remark}[theorem]{Remark}
\newtheorem{example}[theorem]{Example}
\newtheorem*{theorem*}{Theorem}
\numberwithin{equation}{section}
\newenvironment{nouppercase}{%
  \renewcommand{\uppercasenonmath}[1]{}}{}
\begin{document}
	

 \title[CAPILLARY]{Capillary Surfaces in Manifolds with Nonnegative Scalar Curvature \\and Strictly Mean Convex Boundary
}

\author[]{Yujie Wu}
\address{ Department of Mathematics, Stanford University, Building 380, Stanford, CA 94305,
USA}
\email{yujiewu@stanford.edu}

	\begin{abstract}
		In this paper we use stable capillary surfaces (analogous to the $\mu$-bubble construction) to study manifolds with strictly mean convex boundary and nonnegative scalar curvature. 
        We give an obstruction to filling 2-manifolds by such 3-manifolds based on the Urysohn width. We also obtain a bandwidth estimate and establish other geometric properties of such manifolds. 
	\end{abstract}
	
	\begin{nouppercase}
        \maketitle
    \end{nouppercase}

\section{Introduction}

In \cite{gromov2018scalar}, Gromov asked the question of finding sufficient conditions to allow or disallow fill-in of a given Riemannian manifold $Y^n $ as the boundary of a Riemannian manifold $X^{n+1}$ with nonnegative scalar curvature.  Can the mean curvature of $Y =\p X$ prove enough influence so that the we cannot prescribe certain geometry properties on $X$?

In this paper we  are interested in the case when $n=2$. If $Y$ is a connected orientable closed surface with positive  Gaussian curvature, then there is an isometric embedding of $Y$ into $\R^3$ as a strictly convex surface, we denote the mean curvature of this embedding as $H_0$ (such embedding is unique up to isometry of $\R^3$).   Using this, Shi-Tam \cite{shi2002positive}  proved that if there is some $(X,g)$ with nonnegative scalar curvature filling such a $Y$ with positive mean curvature $H$, then
\begin{equation*} 
    \int_{Y} H d\sigma \leq \int_{Y} H_0 d\sigma,
\end{equation*}    
where $d\sigma$ is the volume form induced from the metric $g$. Moreover, equality holds if and only if $X$ is a domain in $\R^3$. This result gives a positive answer to the question in the first paragraph.

In this paper, we give another answer in the following theorem using capillary surfaces. As an example, if a 3-manifold fills $\Sph^2$ with $H\geq 2$ and has nonnegative scalar curvature, then the first Urysohn width of $\Sph^2$ in the induced metric is no more than $4.5\pi$ (take $a_0=\frac{2}{3}, d_0=\frac{3\pi}{4}$).   Recent results in upper bound of Urysohn width  was also obtained in \cite{wang2023uryson}, \cite{liokumovich2023waist}.
\begin{theorem}\label{fillin}
    If $(N^3,\p N,g)$ is a compact manifold with simply-connected boundary such that $R_g \geq 0$ and $H_{\p N} \geq \frac{\pi}{d_0}+a_0$, then the first Urysohn width of $\p N$ (with respect to the induced metric $g$) is bounded: $U_1(\p N)\leq 4d_0+\frac{\pi}{a_0}$.      
\end{theorem}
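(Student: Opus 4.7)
The plan is to build a sweepout of $\p N \cong \Sph^2$ by simple closed curves arising as the boundaries of stable capillary $\mu$-bubbles inside $N$, and then to convert this sweepout into the Urysohn width bound by collapsing each fiber to a point so that the quotient is a 1-complex. Simple-connectedness of $\p N$ forces $\p N \cong \Sph^2$, which is what lets the sweepout be modeled on a Morse function on $\Sph^2$ with two critical points and circular regular fibers. The key geometric input will be a diameter bound of the order $\pi/a_0$ for the trace curves and a thickness bound of the order $2d_0$ for the slab in $N$ that contains the $\mu$-bubble; the two estimates then combine additively to give the bound $4d_0 + \pi/a_0$.

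For each parameter $t$ in the sweepout I would consider the capillary functional
\[\mathcal{F}_t(\Omega) = \mathcal{H}^2\bigl(\p^*\Omega \cap \mathrm{int}(N)\bigr) - \int_\Omega h_t\,dV\]
on Caccioppoli sets $\Omega \subset N$ subject to a separation constraint that prescribes which part of $\p N$ lies in $\p\Omega$. The weight $h_t: N \to \R$ is taken from the classical Gromov tangent ansatz $h_t \sim -(\pi/d_0)\tan(\pi \rho_t/d_0)$ along a distance-like coordinate $\rho_t$ localized to a slab of width $d_0$ inside $N$; blow-up of $h_t$ at the slab walls forces the minimizer into the slab interior, and standard GMT plus free-boundary regularity produce a smooth stable minimizer $\Sigma_t$ of prescribed mean curvature $h_t$ meeting $\p N$ at a right angle. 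The trace $\gamma_t := \Sigma_t \cap \p N$ is an embedded simple closed curve. As $t$ varies the slab translates, so the traces $\gamma_t$ together sweep out $\p N$ outside two polar regions where the slab construction degenerates; each of these polar caps has diameter at most $\sim 2d_0$ by the same bandwidth estimate applied to the degenerate slab.

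To bound the diameter of $\gamma_t$, I would use that simply-connectedness of $\p N$ combined with embeddedness of $\Sigma_t$ forces $\Sigma_t$ to be a topological disk so $\chi(\Sigma_t)=1$. Plugging $\phi \equiv 1$ into the free-boundary capillary stability inequality for $\Sigma_t$, substituting $|A|^2 + \Ric_N(\nu,\nu) = \tfrac12(R_N + h_t^2 + |A|^2) - K_{\Sigma_t}$ via the Gauss equation, and applying Gauss-Bonnet together with the identity relating $\kappa_g^{\Sigma_t}\big|_{\gamma_t}$ to the second fundamental form of $\p N$ (from the right-angle contact), one arrives schematically at
\[2\pi \;\geq\; \int_{\Sigma_t}\!\Big(\tfrac{1}{2}(R_N + h_t^2 + |A_{\Sigma_t}|^2) + \langle\nabla h_t,\nu\rangle\Big)\,dA \;+\; \int_{\gamma_t} H_{\p N}\,d\ell.\]
The tangent ansatz is tuned so that, after a completion-of-the-square, the $\pi/d_0$ piece of $H_{\p N}$ exactly absorbs the negative contribution from the bulk integrand; the residual $a_0$ piece of the $H_{\p N}$ lower bound then produces $a_0\,L(\gamma_t) \leq 2\pi$, so $\mathrm{diam}(\gamma_t)\leq \pi/a_0$.

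Finally, assembling the sweepout, the map $\p N \to G$ that collapses each $\gamma_t$ to a point (with $G$ the associated Reeb 1-complex) has regular fibers of diameter at most $\pi/a_0$ and two singular polar fibers of diameter at most $4d_0$; allowing each fiber to be thickened by the $2d_0$ collar coming from the slab width (so that it still lies over a single level of $G$) yields fiber diameter at most $\pi/a_0 + 4d_0$, giving $U_1(\p N) \leq 4d_0 + \pi/a_0$. The main obstacle I anticipate is the design of the weight family $h_t$ and the constraint sets so that the $\mu$-bubbles exist, vary continuously in $t$, and together sweep out $\p N$ minus the polar caps; verifying the correct sign conventions and boundary terms in the capillary second variation, checking via topology that each $\Sigma_t$ is a disk so that Gauss-Bonnet applies with $\chi = 1$, and calibrating the tangent ansatz so that the bulk integrand bound exactly consumes the $\pi/d_0$ part of $H_{\p N}$ are the most delicate technical steps.
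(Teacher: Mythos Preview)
Your proposal has a genuine gap at the step where you claim ``the $\pi/d_0$ piece of $H_{\p N}$ exactly absorbs the negative contribution from the bulk integrand.'' With only $R_N\ge 0$, the tangent ansatz $h_t\sim -(\pi/d_0)\tan(\pi\rho_t/d_0)$ gives $\tfrac12 h_t^2+\langle\nabla h_t,\nu\rangle\le -c<0$ on $\Sigma_t$, so your displayed inequality becomes $2\pi\ge -c\,|\Sigma_t|+H_{\p N}\,L(\gamma_t)$, and the area term has the wrong sign. In Gromov's closed bandwidth estimate that negative bulk piece is absorbed by a \emph{strictly positive} $R_N$; here there is nothing in the interior to eat it. A two--dimensional area deficit cannot be cancelled by a one--dimensional boundary term: the $\pi/d_0$ part of $H_{\p N}$ lives on $\gamma_t$, while the tangent--ansatz deficit lives on all of $\Sigma_t$, so the dimensions simply do not match. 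No amount of calibration of the tangent profile will fix this without an \emph{a priori} area bound for $\Sigma_t$ in terms of $L(\gamma_t)$, which you do not have. A related issue is that your slab coordinate $\rho_t$ is a distance in $N$, whereas the conclusion concerns the intrinsic geometry of $\p N$; the translation between the two is not free.

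The paper's mechanism is different in exactly this respect and is worth internalizing: instead of a bulk weight $h_t$ and orthogonal contact, one uses a \emph{capillary} functional $\mathcal{A}(\Omega)=|\p\Omega|+\int_{\p N\cap\Omega}w$, where $w=\cos\rho$ and $\rho$ is built from the intrinsic distance on $\p N$ with $|\nabla^{\p N}\rho|<\pi/d_0$. The minimizer is minimal in the interior (so the bulk integrand is $\ge 0$ once $R_N\ge 0$), and the entire tension shows up in the boundary term of the second variation as $\frac{1}{\sin\rho}(\nabla_{\bar\nu}\rho-H_{\p N})$. Now both pieces live on $\p\Sigma$: the $|\nabla\rho|<\pi/d_0$ exactly consumes the $\pi/d_0$ part of $H_{\p N}$, and the leftover $a_0$ yields $a_0\,|\p\Sigma|\le 2\pi$ via Gauss--Bonnet (together with the disk topology forced by $\chi_\Sigma\le 0\Rightarrow$ contradiction). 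Finally, the paper avoids the delicate ``continuous sweepout of $\mu$-bubbles'' you flag as an obstacle by taking a \emph{finite} family of angle profiles $w_l$ indexed by distance annuli on $\p N$; the resulting finitely many separating circles $\p\Sigma_l^k$ and annular pieces $\Gamma_l^j$ are mapped to a graph $G$ with fibers of diameter $\le 4d_0+\pi/a_0$.
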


In another direction, Gromov proved the following bandwidth estimate, here we denote $T^{n-1}$ as the $n-1$ dimensional torus. The theorem can be proved using the idea of $\mu$-bubbles \cite{gromov2019four}. 
\begin{theorem*} [\cite{gromov2018metric}]
    Let $M_0 =T^{n-1}\times [-1,1], \p_{\pm} M_0=T^{n-1} \times \{\pm 1\}$, if a Riemannian manifold $(M^n,g) , 2\leq n\leq 7$ admits a continuous map $f:(M,\p_{\pm} M) \rightarrow (M_0,\p_{\pm} M_0) $ with nonzero degree and scalar curvature bounded from below $R_g \geq n(n-1)$, then the distance of $\p_{+} M$ and $\p_- M$  is bounded: $\text{dist}_g(\p_+ M,\p_-M)\leq \frac{2\pi}{n}$.   
\end{theorem*}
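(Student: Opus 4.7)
The plan is to execute the $\mu$-bubble argument referenced in the statement. Assume toward a contradiction that $d := \text{dist}_g(\p_+M, \p_-M) > 2\pi/n$, and fix $L \in (2\pi/n, d)$. First I build a smooth $1$-Lipschitz function $\rho: M \to [-L/2, L/2]$ with $\rho = \mp L/2$ in a neighborhood of $\p_\mp M$ by mollifying the signed distance to a midlevel set. Next I pick a decreasing warping profile $h: (-L/2, L/2) \to \R$ with $h(t) \to \pm \infty$ as $t \to \mp L/2$, satisfying
\begin{equation*}
h'(t) + \tfrac{n}{2(n-1)} h(t)^2 + \tfrac{n(n-1)}{2} \geq \delta > 0
\end{equation*}
for some fixed $\delta > 0$. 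Such $h$ exists because the critical ODE (with $\delta = 0$) reduces via $h = (n-1) u$ to $u' = -\tfrac{n}{2}(1+u^2)$, whose blow-up interval has length exactly $2\pi/n$; the slack $L > 2\pi/n$ leaves room for a strict subsolution that still blows up at $\pm L/2$.

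Second, I would produce a stable $\mu$-bubble. For the functional
\begin{equation*}
\mathcal{A}(\Omega) = \mathcal{H}^{n-1}(\p^* \Omega) - \int_M (\chi_\Omega - \chi_{\Omega_0}) h(\rho) \, dV_g
\end{equation*}
on Caccioppoli sets $\Omega$ differing from $\Omega_0 := \{\rho < 0\}$ in a compact subset of $\{-L/2 < \rho < L/2\}$, the divergence of $h$ at the endpoints acts as a barrier keeping any minimizing sequence away from $\{\rho = \pm L/2\}$. For $2 \leq n \leq 7$, standard geometric measure theory yields a smooth minimizer $\Sigma := \p^* \Omega^*$ satisfying $H_\Sigma = h(\rho)$ with outward unit normal $\nu$ pointing toward increasing $\rho$.

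Third, I would combine stability with $R_g \geq n(n-1)$. Inserting the Gauss identity $2\Ric_g(\nu,\nu) = R_g - R_\Sigma + H_\Sigma^2 - |A|^2$, the trace bound $|A|^2 \geq H_\Sigma^2/(n-1)$, and $|\nabla \rho| \leq 1$ into the second variation of $\mathcal{A}$ gives for every $\phi \in C^\infty(\Sigma)$:
\begin{equation*}
\int_\Sigma |\nabla_\Sigma \phi|^2 \, dA + \tfrac{1}{2} \int_\Sigma R_\Sigma \phi^2 \, dA \geq \int_\Sigma \left( \tfrac{n(n-1)}{2} + \tfrac{n}{2(n-1)} h^2 + h' \right) \phi^2 \, dA \geq \delta \int_\Sigma \phi^2 \, dA.
\end{equation*}
Thus the conformal Laplacian on $\Sigma$ has strictly positive principal eigenvalue.

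For the topological contradiction, $\Sigma$ separates $\p_+M$ from $\p_-M$, so $[\Sigma] = [\p_-M]$ in $H_{n-1}(M;\Z)$ and the composition $\pi \circ f|_\Sigma : \Sigma \to T^{n-1}$, where $\pi: M_0 \to T^{n-1}$ is the projection, inherits degree $\deg f \neq 0$. When $n = 3$, taking $\phi \equiv 1$ above yields $4\pi \chi(\Sigma) = \int_\Sigma R_\Sigma \geq 2\delta |\Sigma| > 0$, but $\Sigma^2$ has genus at least one by the torus domination, forcing $\chi(\Sigma) \leq 0$---a contradiction. For general $2 \leq n \leq 7$, the positive first eigenvalue of the conformal Laplacian allows a conformal change to a metric of positive scalar curvature on $\Sigma^{n-1}$, producing a closed manifold of positive scalar curvature that admits a nonzero-degree map to $T^{n-1}$; this contradicts the Schoen-Yau-Gromov-Lawson torus rigidity theorem. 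The main obstacle is the topological bookkeeping---verifying that $\Sigma$ inherits a nonzero-degree map to $T^{n-1}$ via the separating property and the relative degree of $f$---since the $\mu$-bubble existence, the algebraic manipulation of the stability inequality, and torus rigidity are by now standard.
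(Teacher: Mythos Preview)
The paper does not actually prove this theorem. It is stated as a cited result of Gromov, accompanied only by the remark that ``the theorem can be proved using the idea of $\mu$-bubbles,'' and then used as motivation for the paper's own capillary bandwidth estimate (Theorem~\ref{bandwidthest}). So there is no in-paper proof to compare against.

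That said, your proposal is precisely the standard $\mu$-bubble argument the paper alludes to, and it is essentially correct. Two small points worth tightening. First, what you actually establish is positivity of $-\Delta_\Sigma + \tfrac{1}{2}R_\Sigma$, not of the conformal Laplacian itself; you should note that since the quadratic form $\alpha\int|\nabla\phi|^2 + \int R_\Sigma\phi^2$ is nondecreasing in $\alpha$, and $\tfrac{4(m-1)}{m-2} \geq 2$ for $m=n-1\geq 3$, positivity transfers to the conformal Laplacian, after which the conformal change to PSC is legitimate. Second, your case split handles $n=3$ via Gauss--Bonnet and $n\geq 4$ via conformal Laplacian/torus rigidity, but $n=2$ falls through both; there $\Sigma$ is a union of circles with $R_\Sigma=0$, and plugging $\phi\equiv 1$ into your displayed inequality yields $0\geq \delta|\Sigma|>0$, which is the needed contradiction. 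The topological step---that a separating $\mu$-bubble inherits a nonzero-degree map to $T^{n-1}$ via $\pi\circ f$---is indeed routine once you pass to a component with odd intersection with the transverse arc, exactly as the paper does (mod 2) in its own proof of Theorem~\ref{bandwidthest}.
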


Using $\mu$-bubbles, that is, studying stable hypersurfaces with prescribed mean curvature in a manifold with positive scalar curvature (PSC) has given fruitful results in recent years (see for example \cite{li2020polyhedron} \cite{zhu2021width}\cite{chodosh2024generalized}\cite{chodosh2022complete}). 
In these works, the fact that the scalar curvature of the manifold has to obtain a strictly positive lower bound is crucial.
On the other hand, for manifolds with boundary, to constrain a minimizer of a generalized area functional, we can prescribe both mean curvature and the angle of intersection along the boundary. In particular, mean convexity assumption is helpful to constrain capillary surfaces. This allows us to relax the assumption of PSC when studying manifolds with boundary.

Gromov studied a band with PSC of the form $\Sigma^2 \times [-1,1]$, for a closed surface with $\chi_{\Sigma}\leq 0$ (for example the torus).  Our model example in the case $\Sigma$ has boundary would be $M_0=(\Sigma, \p \Sigma)\times [-1,1] $, with a Riemannian metric such that $R_{M_0}\geq 0$, and the boundary $\p M_0$ is strictly mean convex. We will use capillary surfaces in $M_0$ as an analogy to the $\mu$-bubbles in Gromov's bandwidth estimate.  To avoid regularity problem that may occur when the capillary surface touches the corner $\p \Sigma \times \{\pm 1\}$, we will cap the model example $M_0$ with a top $M_{+}$ and a bottom $M_-$. This explains the decomposition we have in the following theorem.

\begin{theorem}[Bandwidth estimate] \label{bandwidthest} 
    Consider a compact 3-manifold $(M,\p M)$ with the following decomposition (see Figure 1), 
    $$M=M_0 \cup M_{\pm},\,\, \p M=\p_0 M \cup \p_{\pm} M,\,\, \p M_{\pm}= S_{\pm} \cup \p_{\pm} M,\,\, \p M_0= \p_0 M \cup S_{\pm}.$$ 
    Here $M_0$ is homeomorphic to $S_- \times [-1,1]$,  $S_-$ is an orientable surface with boundary and Euler characteristic $\chi(S_-) \leq 0$. Assume $\p_+ M \cap \p _- M =\emptyset$, and for any component $\Gamma_{+}$ of $\p_{+} M$, there is a component $\Gamma_{-}$ of $\p_{-} M$ such that, $d_{\p M} (\Gamma_+,\Gamma_-) <\infty$. 

    We denote the scalar curvature of $M$ by $R_M$, mean curvature of $\p_i M$ by $H_i$ for $i \in \{+,-,0\}$.     
    If $R_M \geq 0, H_0\geq 2, H_{\pm} \geq 0$, then $d_{M}(S_+,S_-)\leq 4+\pi.$  
\end{theorem}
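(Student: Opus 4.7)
The plan is to argue by contradiction, adapting Gromov's $\mu$-bubble scheme to the capillary setting. Assume $L := d_M(S_+, S_-) > 4 + \pi$, and set $\rho(x) := d_M(x, S_-)$. Choose a small $\delta > 0$ with $L - 4 - 2\delta > \pi$, and pick a smooth profile $h_0 \colon (2+\delta, L-2-\delta) \to \R$ with $h_0 \to -\infty$ at the left endpoint, $h_0 \to +\infty$ at the right endpoint, and satisfying a first-order differential inequality of the form $|h_0'| < \tfrac{3}{4}\,h_0^{\,2} + \alpha$ for a small constant $\alpha > 0$; standard ODE analysis gives such $h_0$ provided $L - 4 - 2\delta$ is less than the life span $2\pi/\sqrt{3\alpha}$ of the associated comparison ODE. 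Setting $h := h_0 \circ \rho$ on the band $\{\rho \in (2+\delta, L-2-\delta)\}$, consider the capillary $\mu$-bubble functional
\[
\mathcal{A}(\Omega) := \mathcal{H}^2(\partial^*\Omega \cap M^\circ) - \int_\Omega h\, dV
\]
over Caccioppoli sets $\Omega \subset M$ containing a neighbourhood of $\partial_- M$ and disjoint from $\partial_+ M$. The blow-up of $h$ at the two barrier level sets $\{\rho = 2+\delta\}$ and $\{\rho = L-2-\delta\}$, together with the mean convexity $H_\pm \geq 0$ of $\partial_\pm M$, confines any minimizer $\Omega$ strictly inside the band $M_0$; the decomposition with caps $M_\pm$ in the hypothesis is exactly what keeps the free-boundary surface $\Sigma := \partial^*\Omega \cap M^\circ$ clear of the corners $\partial_0 M \cap \overline{S_\pm}$, so that standard dimension-$3$ regularity produces a smooth capillary surface $\Sigma$ meeting $\partial_0 M$ orthogonally with $H_\Sigma = h|_\Sigma$.

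I would next extract the stability inequality. Plugging $\varphi \equiv 1$ into $\delta^2 \mathcal{A}(\varphi) \geq 0$, using the Gauss equation $\Ric_M(N,N) + |A|^2 = \tfrac{R_M}{2} - K_\Sigma + \tfrac{1}{2}(H^2 + |A|^2)$, the hypotheses $R_M \geq 0$ and $|A|^2 \geq H^2/2$ (valid in dimension $2$), Gauss--Bonnet on $\Sigma$, and the identity $k_g + \sff^{\partial_0 M}(N,N) = H_{\partial_0 M}$ for orthogonal capillary contact, one obtains
\[
\int_\Sigma \Bigl(\tfrac{3}{4}\,h^2 + \partial_N h\Bigr) dA + 2\,|\partial\Sigma| \;\leq\; 2\pi\, \chi(\Sigma).
\]
The $2|\partial\Sigma|$ term is precisely where the hypothesis $H_{\partial_0 M} \geq 2$ enters. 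Using that each component of $\Sigma$ is a relative cycle in $M_0 \cong S_- \times [-1,1]$ homologous to a component of $S_-$---with the hypothesis $d_{\partial M}(\Gamma_+, \Gamma_-) < \infty$ providing the requisite pairing and separation structure---together with $\chi(S_-) \leq 0$, one concludes $\chi(\Sigma) \leq 0$, making the right-hand side non-positive.

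To finish, the profile $h_0$ is chosen so that $\tfrac{3}{4}\, h_0^{\,2} - |h_0'| + \alpha \geq 0$ is strict; combined with the pointwise bound $|\partial_N h| \leq |h_0'(\rho)|$ coming from $|\nabla \rho| \leq 1$, this forces the left-hand side of the stability bound to exceed $-\alpha\,|\Sigma| + 2\,|\partial\Sigma|$, which by an isoperimetric-type argument exploiting the slice geometry of $\Sigma$ inside the product band $M_0$ is strictly positive, giving the contradiction. The ``$\pi$'' in the distance bound is the limiting life span of the ODE for $h_0$ (as $\alpha$ approaches the critical value $\tfrac{4}{3}$ matching the boundary slack), while the ``$4 = 2+2$'' records the two endpoint offsets $2+\delta$ needed so that the capillary bubble stays in the regular interior of $M_0$, exploiting $H_{\partial_0 M} \geq 2$ as the barrier constant. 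The main obstacle is this final calibration: simultaneously matching $\alpha$ with the perimeter-to-area ratio of $\Sigma$ via an isoperimetric absorption and maintaining the topological control $\chi(\Sigma) \leq 0$, which together pin down the constant $4+\pi$ in the statement.
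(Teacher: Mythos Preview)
Your approach is genuinely different from the paper's, and it has a real gap at exactly the point you flag as ``the main obstacle.''

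The paper does \emph{not} use an interior $\mu$-bubble with free (orthogonal) boundary. Instead it minimizes the capillary functional
\[
\mathcal{A}(\Omega)=|\partial\Omega|+\int_{\partial M\cap\Omega} w,
\]
with a prescribed-angle weight $w=\cos\rho$ supported on a strip of $\partial_0 M$ of intrinsic width just over $\pi$ and $|\nabla^{\partial_0 M}\rho|<1$. The minimizer $\Sigma$ is a \emph{minimal} surface ($H_\Sigma=0$) meeting $\partial_0 M$ at the prescribed angle. Plugging $\varphi\equiv 1$ into the capillary second variation and using Li's identity $-H_{\partial M}=\langle\nabla_{\bar\nu}\bar\nu,\bar N\rangle+\cos\rho\langle N,\nabla_\nu\nu\rangle+\sin\rho\,k_{\partial\Sigma}$, the boundary term becomes $\int_{\partial\Sigma}\frac{1}{\sin\rho}(\nabla_{\bar\nu}\rho-H_{\partial M})<0$ directly from $H_{\partial_0 M}\ge 2>|\nabla\rho|$. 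This yields $0<2\pi\chi(\Sigma)$ with no area term to absorb. The ``$\pi$'' is simply the width of the angle strip (the range of $\arccos$), and the ``$4=2+2$'' comes from a separate lemma (the paper's Lemma~\ref{DistBound}) showing $d_\Sigma(x,\partial\Sigma)\le 2$ for a stable capillary disk, which keeps $\Sigma$ away from $S_\pm$.

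In your scheme, by contrast, the stability inequality leaves you with
\[
0\ge 2|\partial\Sigma|+\int_\Sigma\Bigl(\tfrac{3}{4}h^2+\partial_N h\Bigr)\ \ge\ 2|\partial\Sigma|-\alpha|\Sigma|,
\]
and to reach a contradiction you must show $|\partial\Sigma|/|\Sigma|>\alpha/2$. There is no mechanism in the hypotheses to produce such an isoperimetric ratio: $\Sigma$ is an unknown surface sitting in a $3$-manifold with only $R_M\ge 0$, and its area is not controlled by its boundary length. The phrase ``isoperimetric-type argument exploiting the slice geometry'' does not supply one. Relatedly, your accounting of the constants is off: with an interior weight $h$ blowing up at the barrier levels, the minimizer is already trapped away from $S_\pm$, so there is no reason the offsets should equal~$2$; and the ODE life span $2\pi/\sqrt{3\alpha}$ equals $\pi$ only at $\alpha=4/3$, which is not ``small'' and makes the needed ratio $|\partial\Sigma|/|\Sigma|>2/3$ even more implausible. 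The fix is to move the weight from the interior to the boundary angle, as the paper does, so that the mean-convexity $H_0\ge 2$ enters the stability inequality as a pointwise boundary integrand rather than as a perimeter term competing against area.
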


\begin{figure}
    \def\svgwidth{2in}
    \begin{center}
\begingroup%
  \makeatletter%
  \providecommand\color[2][]{%
    \errmessage{(Inkscape) Color is used for the text in Inkscape, but the package 'color.sty' is not loaded}%
    \renewcommand\color[2][]{}%
  }%
  \providecommand\transparent[1]{%
    \errmessage{(Inkscape) Transparency is used (non-zero) for the text in Inkscape, but the package 'transparent.sty' is not loaded}%
    \renewcommand\transparent[1]{}%
  }%
  \providecommand\rotatebox[2]{#2}%
  \newcommand*\fsize{\dimexpr\f@size pt\relax}%
  \newcommand*\lineheight[1]{\fontsize{\fsize}{#1\fsize}\selectfont}%
  \ifx\svgwidth\undefined%
    \setlength{\unitlength}{258.25174407bp}%
    \ifx\svgscale\undefined%
      \relax%
    \else%
      \setlength{\unitlength}{\unitlength * \real{\svgscale}}%
    \fi%
  \else%
    \setlength{\unitlength}{\svgwidth}%
  \fi%
  \global\let\svgwidth\undefined%
  \global\let\svgscale\undefined%
  \makeatother%
  \begin{picture}(1,1.08848544)%
    \lineheight{1}%
    \setlength\tabcolsep{0pt}%
    \put(0,0){\includegraphics[width=\unitlength,page=1]{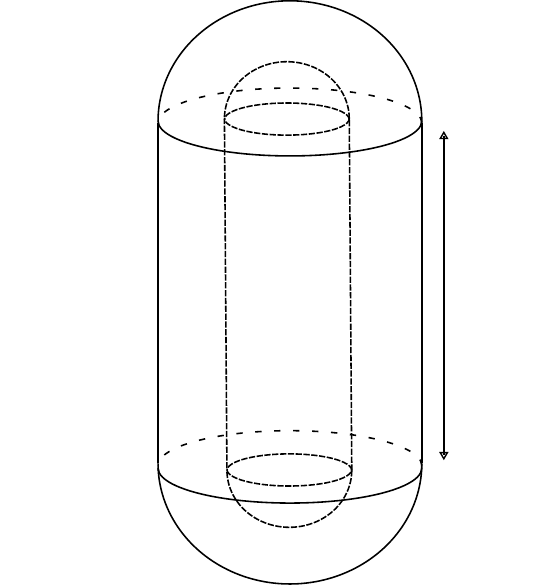}}%
    \put(0.87604712,0.96098929){\color[rgb]{0,0,0}\makebox(0,0)[lt]{\lineheight{1.25}\smash{\begin{tabular}[t]{l}$M_+$\end{tabular}}}}%
    \put(0.88253598,0.54276842){\color[rgb]{0,0,0}\makebox(0,0)[lt]{\lineheight{1.25}\smash{\begin{tabular}[t]{l}$M_0$\end{tabular}}}}%
    \put(0.88733459,0.10424102){\color[rgb]{0,0,0}\makebox(0,0)[lt]{\lineheight{1.25}\smash{\begin{tabular}[t]{l}$M_-$\end{tabular}}}}%
    \put(-0.00289281,0.8501527){\color[rgb]{0,0,0}\makebox(0,0)[lt]{\lineheight{1.25}\smash{\begin{tabular}[t]{l}$S_+$\end{tabular}}}}%
    \put(0.00541972,0.20161388){\color[rgb]{0,0,0}\makebox(0,0)[lt]{\lineheight{1.25}\smash{\begin{tabular}[t]{l}$S_-$\end{tabular}}}}%
    \put(0,0){\includegraphics[width=\unitlength,page=2]{drawing.pdf}}%
  \end{picture}%
\endgroup%

    \end{center}
    \caption{An example of $M$ when $S_-$ is an annulus.} \label{thm1.3}
\end{figure}

In \cite{ros1997stability}, Ros and Souam studied constant mean curvature surfaces which intersect the ambient boundary at a fixed angle, called capillary surfaces. 
In the second variation formula for capillary surfaces, instead of fixed angle, we allow the prescribed angle to vary.
For manifolds with nonnegative scalar curvature and strictly mean convex boundary, 
we are able to gain rigidity results for 2-manifolds (Theorem \ref{2d-cpt}), and  exhaustion of the boundary for 3-manifolds (Theorem \ref{3d-decomp}).
\begin{theorem}\label{2d-cpt}
    Consider a connected Riemannian manifold $\Sigma^2$ with boundary, such that   $R_{\Sigma}\geq 0, k_{\p \Sigma} \geq 1$, then $\p \Sigma$ is connected  with length no more than $2\pi$; for any $x\in \Sigma$,  $d_{\Sigma}(x,\p \Sigma) \leq 1,$ so $\Sigma$ is compact. $\Sigma$ is a topological disk.  If $|\p \Sigma|=2\pi$, then $\Sigma$ is isometric to a flat unit disk in $\R^2$.   
\end{theorem}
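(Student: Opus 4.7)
The plan is to combine a second-variation stability argument with the Gauss--Bonnet theorem, exploiting that in dimension two $R_\Sigma = 2K$, so the hypothesis $R_\Sigma \geq 0$ reduces to $K \geq 0$. First I would establish the distance bound $d(x,\partial\Sigma) \leq 1$. For $x \in \Sigma$, take a length-minimizing curve $\gamma:[0,L]\to\Sigma$ from $\partial\Sigma$ to $x$; by the first variation $\gamma$ is a geodesic meeting $\partial\Sigma$ orthogonally at $\gamma(0)$. For a normal variation $V(t) = \phi(t)\,e(t)$ with $e$ a unit normal field along $\gamma$, $V(L)=0$, and $V(0)$ tangent to $\partial\Sigma$, the standard second-variation formula reads
$$L''(0) = \int_0^L \bigl(\phi'(t)^2 - K(\gamma(t))\,\phi(t)^2\bigr)\,dt - k_{\partial\Sigma}(\gamma(0))\,\phi(0)^2.$$
Taking the test function $\phi(t) = L - t$ and using $K \geq 0$, $k_{\partial\Sigma} \geq 1$, the stability inequality $L''(0) \geq 0$ gives $L \geq L^2$, hence $L \leq 1$. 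Equivalently, the Riccati equation $k_t' = k_t^2 + K$ for the geodesic curvature of equidistant curves in Fermi coordinates, with $k_0 \geq 1$ and $K \geq 0$, produces a focal point by $t \leq 1/k_0 \leq 1$.

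Next I would upgrade this to compactness of $\Sigma$. Since every point lies within distance one of $\partial\Sigma$, it suffices to show $\partial\Sigma$ itself is compact. Any non-compact connected component $\Gamma$ would be diffeomorphic to $\R$ with geodesic curvature $\geq 1$ in $\Sigma$. Applying Gauss--Bonnet to a long thin tubular neighborhood of a finite arc of $\Gamma$---a topological disk bounded by the arc, two perpendicular geodesic segments of length $\epsilon$, and the equidistant curve at distance $\epsilon$---together with the Fermi/Riccati estimate $f(s,t)\leq 1 - k_0(s)\, t$ on the metric coefficient, should yield a contradiction for the arc-length large and $\epsilon$ small. Once $\Sigma$ is known to be compact, Gauss--Bonnet gives
$$2\pi\chi(\Sigma) = \int_\Sigma K\,dA + \int_{\partial\Sigma} k_{\partial\Sigma}\,ds \geq 0 + |\partial\Sigma| > 0,$$
so $\chi(\Sigma) \geq 1$. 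Combined with $\chi(\Sigma) = 2 - 2g - b \leq 2 - b$ for genus $g \geq 0$ and $b \geq 1$ boundary components, this forces $g = 0$, $b = 1$, so $\Sigma$ is a topological disk with connected boundary, and $|\partial\Sigma| \leq 2\pi\chi = 2\pi$.

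For the rigidity statement when $|\partial\Sigma| = 2\pi$, equality throughout the Gauss--Bonnet chain forces $K \equiv 0$ on $\Sigma$ and $k_{\partial\Sigma} \equiv 1$ on $\partial\Sigma$. A simply connected flat surface with smooth boundary embeds isometrically in $\R^2$, and a smooth closed plane curve of constant geodesic curvature $1$ and length $2\pi$ is forced to be a round unit circle, so $\Sigma$ is isometric to the flat unit disk. I expect the main obstacle to be the compactness step: ruling out non-compact components of $\partial\Sigma$ requires carefully controlling the topology of Fermi tubular neighborhoods so that Gauss--Bonnet actually bites. An alternative is to assume $\Sigma$ is complete (standard under this setup), in which case the distance bound plus Hopf--Rinow directly reduces compactness of $\Sigma$ to compactness of each boundary component, which in turn follows from applying the Gauss--Bonnet length bound on any compact piece.
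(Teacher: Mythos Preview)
Your distance bound via second variation and your Gauss--Bonnet argument (once compactness is known) for $|\partial\Sigma|\le 2\pi$, $\chi(\Sigma)=1$, and connectedness of $\partial\Sigma$ are correct and essentially match the paper. The rigidity step is a bit brisk but morally right; the paper carries it out by patching local isometries along $\partial\Sigma$ and then along paths into the interior.

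The genuine gap is the compactness step, and your proposed tube argument does not close. In Fermi coordinates $dt^2+f(s,t)^2\,ds^2$ off an arc of $\Gamma\subset\partial\Sigma$ one has $f(s,0)=1$, $f_t(s,0)=-k_{\partial\Sigma}(s)$, and $f_{tt}=-Kf$. Applying Gauss--Bonnet to the rectangle $\{0\le s\le L,\ 0\le t\le\epsilon\}$ with its four right-angle corners gives
\[
\int_T K\,dA \;+\; \int_0^L k_{\partial\Sigma}(s)\,ds \;+\; \int_0^L f_t(s,\epsilon)\,ds \;+\; 2\pi \;=\; 2\pi,
\]
i.e.\ $\int_T K\,dA = \int_0^L\bigl(f_t(s,0)-f_t(s,\epsilon)\bigr)\,ds$. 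But $\int_T K\,dA=\int_0^L\!\int_0^\epsilon(-f_{tt})\,dt\,ds=\int_0^L\bigl(f_t(s,0)-f_t(s,\epsilon)\bigr)\,ds$ identically, so the formula is a tautology and yields no contradiction for large $L$. The point is that the equidistant curve contributes \emph{negative} geodesic curvature that exactly offsets the contribution from $\Gamma$; your estimate $f\le 1-k_0 t$ does not help here. Your fallback (``apply the Gauss--Bonnet length bound on any compact piece'') is circular, since that bound was derived only after compactness.

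The paper handles this step by completely different means. First it shows $\partial\Sigma$ is connected: if there were two components, either a stable free-boundary geodesic between them gives a second-variation contradiction, or (when a component is noncompact) one produces a geodesic line in the universal cover and invokes the Toponogov splitting theorem for manifolds with convex boundary to force $\tilde\Sigma\cong\mathbb{R}\times I$, contradicting strict convexity. Then, assuming $\partial\Sigma$ is connected but noncompact, it runs a \emph{capillary} minimization: minimize $\mathcal{A}(\Omega)=\mathcal{H}^1(\partial\Omega)+\int_{\Omega\cap\partial\Sigma}w$ with a prescribed angle function $w=-\cos\rho$, $|\nabla\rho|<1$, over Caccioppoli sets pinned near a basepoint. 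Barriers force a minimizer with nonempty free boundary, and the second variation combined with $k_{\partial\Sigma}\ge 1$ and $|\nabla\rho|<1$ gives a strict contradiction. This capillary mechanism is the paper's main new input in two dimensions and is what your sketch is missing.
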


The fact that every point in this manifold must be at most distance 1 away from the boundary follows from Li-Nguyen \cite{li2014compactness} and Li \cite{li2014sharp}. Furthermore, Li \cite{li2014sharp} conjectured the following: A complete Riemannian manifold $M^n$ with nonnegative Ricci curvature and $A_{\p M}\geq 1$ must be compact ($A_{\p M}$ is the second fundamental form of the boundary).
This theorem solves the 2-dimensional case of this conjecture. We note in Theorem \ref{2d-cpt} we do not assume orientability or embeddedness. In the case of $M^n\subset \R^{n+1}$, a strictly convex hypersurface bounding a region with pinched second fundamental form, Hamilton \cite{hamilton1994convex}  showed that $M$ must be compact.   

Contrary to the 2-dimensional case, for a connected 3-manifold with nonnegative scalar curvature and $H_{\p M} \geq 2$ ($H_{\p M}$ is mean curvature of the boundary), its boundary might not be connected or compact. But we can obtain an exhaustion of the boundary as below, and show that the boundary has linear volume growth under further topological assumptions (see Corollary \ref{3d-growth}).

\begin{theorem}\label{3d-decomp}
    Let $(M^3, \p M)$ be a complete connected Riemannian manifold with nonnegative scalar curvature, and $\p M$ is connected non-compact with $H_{\p M}\geq 2$. 
    
    We can find an exhaustion of the boundary $\p M=\cup_k Z_k$, with $\p Z_k =\p \Sigma'_k$, where $\Sigma'_k$ is a union of finitely many capillary surface of disk type in $M$. Each component $\Sigma_k$ of $\Sigma'_k$ has bounded boundary length: $|\p \Sigma_k| \leq 2\pi$, and $d_{\Sigma_k}(x,\p \Sigma_k) \leq 2$ for any $x\in \Sigma_k$ . 
        
    Furthermore, if $\pi_1(\p M)=0$, let   $E_k$ be an unbounded component of $\p M \setminus Z_k$ such that $E_{k+1} \subset E_k$ (so $(E_k)_{k=1}^{\infty}$ is an end of $\p M$), then $\overline{E_{k}}\setminus E_{k+1}$ is connected and $\sup_k \text{diam}_{\p M}(\overline{E_{k}}\setminus E_{k+1}) \leq 5\pi$.       
\end{theorem}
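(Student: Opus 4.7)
My plan is to produce each $\Sigma'_k$ as a capillary $\mu$-bubble minimizer and then invoke Theorem~\ref{2d-cpt} on each of its connected components.

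\emph{Step 1: Capillary $\mu$-bubble exhaustion.} Fix a nested compact exhaustion $K_1\subset K_2\subset\cdots$ of $\partial M$ by connected smoothly bounded domains and collar each $K_j$ slightly into $M$. For each $k$ choose a weight $h_k\in C^\infty(M)$ that blows up to $+\infty$ on $K_k$ and to $-\infty$ past $K_{k+1}$, and a contact angle $\theta$ near $\pi/2$. Minimize the capillary functional
\[
\mathcal A_k(\Omega) \;=\; \mathcal H^2\!\bigl(\partial^*\Omega \cap \mathrm{int}\,M\bigr) \;-\; (\cos\theta)\,\mathcal H^2\!\bigl(\partial^*\Omega\cap \partial M\bigr) \;-\; \int_\Omega h_k\,dV
\]
over finite-perimeter sets $\Omega\subset M$ agreeing with a fixed reference set near both ends. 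Existence and interior $C^{1,\alpha}$ regularity of a minimizer $\Omega_k$ follow from capillary GMT (a free-boundary analogue of the $\mu$-bubble construction in \cite{ChodoLi2020-GeneralizedSoapBubbles}). The barrier property of $h_k$, together with $H_{\partial M}\geq 2$ preventing $\partial \Sigma'_k$ from sliding off along $\partial M$, forces $\Sigma'_k:=\partial^*\Omega_k\cap \mathrm{int}\, M$ to be nonempty and to separate $K_k$ from the far end. Define $Z_k$ to be the component of $\partial M\setminus \partial\Sigma'_k$ containing $K_k$; the $Z_k$'s then exhaust $\partial M$.

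\emph{Step 2: Disk structure of the components.} Let $\Sigma$ be a connected component of $\Sigma'_k$. Testing the Ros--Souam capillary stability inequality \cite{RosSouam1997-StabilityCapillarySurfaces} against $\phi\equiv 1$, rewriting $\mathrm{Ric}_M(\nu,\nu)$ via the Gauss equation, and applying Gauss--Bonnet on $(\Sigma,\partial\Sigma)$ should produce a schematic inequality
\[
4\pi\chi(\Sigma) \;\geq\; \int_\Sigma R_M \;+\; \int_{\partial\Sigma}\bigl(\csc\theta\,H_{\partial M} \,+\, \text{boundary curvature terms}\bigr),
\]
where the boundary integrand is nonnegative after using the contact-angle identities and $H_{\partial M}\geq 2$. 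Together with $R_M\geq 0$ this forces $\chi(\Sigma)=1$ and $|\partial\Sigma|\leq 2\pi$. A localized version of the same estimate, with test function supported in small geodesic discs, promotes this to the pointwise bounds on Gauss curvature and boundary geodesic curvature needed to feed into Theorem~\ref{2d-cpt} (up to the numerical factors that degrade $d_\Sigma(\cdot,\partial\Sigma)\leq 1$ into $\leq 2$). A standard area-comparison then shows $\Sigma'_k$ has only finitely many components.

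\emph{Step 3: End structure under $\pi_1(\partial M)=0$.} A connected, simply connected, non-compact surface is homeomorphic to $\R^2$, so each component of $\partial\Sigma'_k$ is a Jordan curve bounding a unique disk in $\partial M$. Choosing each $K_k$ itself to be a closed topological disk forces $Z_k$ to be a disk, so $E_k:=\partial M\setminus Z_k$ is connected and $\overline{E_{k+1}}\setminus E_k$ is a closed topological annulus with two boundary circles of length $\leq 2\pi$. Any point in this annulus can be joined to one of these circles by pushing into the adjacent capillary disk $\Sigma_k$ or $\Sigma_{k+1}$ (intrinsic distance-to-boundary at most $2$) and projecting back out through the capillary contact; summing the worst-case contributions by triangle inequality gives $\mathrm{diam}_{\partial M}(\overline{E_{k+1}}\setminus E_k)\leq 5\pi$. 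The main obstacle is Step~1, namely showing the capillary $\mu$-bubble is nonempty and that its free boundary genuinely separates the two ends: this is where $H_{\partial M}\geq 2$ takes the place of the PSC lower bound in Gromov's framework, functioning both as a barrier trapping $\partial\Sigma'_k$ in the transition region and as the curvature budget that survives through Gauss--Bonnet in Step~2.
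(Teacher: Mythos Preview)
Your proposal has two genuine gaps, both of which the paper resolves differently.

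\textbf{The functional in Step 1 is not the right one for $R_M\geq 0$.} You put the barrier into a volume weight $h_k$ on $M$ and keep a constant contact angle $\theta$. That is the PSC $\mu$-bubble recipe: in the second variation the interior integrand is schematically $R_M-|\sff|^2-h_k^2-2|\nabla h_k|$, and to extract anything you need $R_M$ to dominate $h_k^2+2|\nabla h_k|$. Since here $R_M\geq 0$ only, a weight that blows up at both ends cannot satisfy this, so stability gives nothing. The paper's point is precisely to move the barrier from the interior to the boundary: one minimizes
\[
\mathcal A(\Omega)=|\partial\Omega|+\int_{\partial M\cap\Omega} w,
\]
with a \emph{varying} angle datum $w=\cos\rho$ on $\partial M$, $|\nabla^{\partial M}\rho|<1$, and $|w|>1$ outside a fixed annulus $B^{\partial M}_{(k+1)\pi}(x)\setminus B^{\partial M}_{k\pi}(x)$. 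The minimizer is then \emph{minimal} ($H_\Sigma=0$), and in the second variation all the ``$h$-terms'' become boundary terms handled by Li's identity \eqref{Li-capillary}; the inequality one actually uses is $H_{\partial M}\ge \tfrac{\pi}{d_0}+a_0>|\nabla^{\partial M}\rho|$, which is exactly $H_{\partial M}\geq 2$ with $d_0=\pi$, $a_0=1$. This is what replaces PSC.

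\textbf{The distance bound in Step 2 does not reduce to Theorem \ref{2d-cpt}.} Localizing the test function in the stability inequality cannot produce pointwise bounds on $R_\Sigma$ or $k_{\partial\Sigma}$, and in fact a stable capillary surface in $(M,g)$ need not satisfy $R_\Sigma\geq 0$ or $k_{\partial\Sigma}\geq 1$. What stability does give is a positive first eigenfunction $u>0$ solving
\[
\Delta_\Sigma u\le \tfrac12 R_\Sigma u,\qquad \frac{\nabla_\nu u}{u}\ge 1-k_{\partial\Sigma},
\]
the second line coming from \eqref{Li-capillary} and $H_{\partial M}\ge 2$. This is exactly the input to Lemma \ref{DistBound} (a weighted 2D $\mu$-bubble on $\Sigma$), which yields $d_\Sigma(x,\partial\Sigma)\le 2$ directly. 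The $\chi(\Sigma)=1$ and $|\partial\Sigma|\le 2\pi$ conclusions do follow from $\phi\equiv 1$ in the stability inequality together with Gauss--Bonnet and \eqref{Li-capillary}, as you sketch.

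\textbf{A smaller issue in Step 3.} Your diameter argument routes through the capillary disk $\Sigma_k\subset M$, but the diameter is intrinsic to $\partial M$; a path in $\Sigma_k$ does not help. In the paper the bound comes from the construction of $w_k$: by design $\partial\Sigma'_k\subset B^{\partial M}_{(k+1)\pi}(x)\setminus B^{\partial M}_{k\pi}(x)$, so any point of the annular slice lies within $\partial M$-distance $2\pi$ of the single circle $\partial\Sigma_k$ (connected by the $\pi_1(\partial M)=0$ argument), and two such points are then joined through $\partial\Sigma_k$ at cost $\le 2\pi+\pi+2\pi=5\pi$.
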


Using Theorem \ref{3d-decomp} when $\pi_1(\p M)=0$ and $\p M$ has weakly bounded geometry we can apply section 3 in \cite{wu2023free} to get the following corollary.  
\begin{corollary}\label{3d-growth}
    Let $M^3$ be a complete connected Riemannian manifold with nonnegative scalar curvature, whose non-compact boundary $\p M$  is simply connected, uniformly mean convex and has weakly bounded geometry, then each end of $\p M$ has linear volume growth. In particular, if $\p M$ has finitely many ends, then  $\p M$ has linear volume growth.  
\end{corollary}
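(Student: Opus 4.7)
The plan is to feed the exhaustion produced by Theorem \ref{3d-decomp} into the volume comparison machinery of Section~3 of \cite{Wu2023-FreeBoundaryStable}. Fix an end $(E_k)_{k=1}^{\infty}$ of $\p M$. Since $\pi_1(\p M)=0$ and $H_{\p M}$ is uniformly bounded below by a positive constant (which after rescaling we may take to be at least $2$), Theorem \ref{3d-decomp} applies and produces annular shells $\overline{E_{k+1}}\setminus E_k$ which are connected with intrinsic diameter in $\p M$ bounded by $5\pi$.

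Next, I would invoke the weakly bounded geometry assumption to produce a uniform upper bound $V_0$ on the volume of any intrinsic ball in $\p M$ of radius $5\pi$; this is exactly the type of input used in Section~3 of \cite{Wu2023-FreeBoundaryStable}. Since each shell $\overline{E_{k+1}}\setminus E_k$ is connected with diameter at most $5\pi$, it sits inside a single such ball, so $|\overline{E_{k+1}}\setminus E_k|\leq V_0$ and hence $|E_k|\leq |E_1|+V_0\,k$ by telescoping.

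To convert this into linear growth in terms of distance, I would fix a basepoint $p\in \p E_1$ and concatenate paths across consecutive shells: any $q\in \overline{E_{k+1}}\setminus E_k$ can be joined to $p$ by a curve in $\p M$ of length at most $5\pi(k+1)$, so $E_k\subset B_{\p M}(p,Ck)$ for a uniform constant $C$. Combined with the previous step, this gives linear volume growth of the end in the form $|B_{\p M}(p,r)\cap \bigcup_k E_k|\leq C' r$. When $\p M$ has only finitely many ends, summing these estimates over the ends yields the global statement.

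The main obstacle I anticipate is the precise dictionary between this paper's framework and that of \cite{Wu2023-FreeBoundaryStable}: one has to verify that \emph{weakly bounded geometry plus uniform mean convexity plus $\pi_1(\p M)=0$} literally provides the hypotheses required by Section~3 of that paper, and that the capillary exhaustion supplied by Theorem \ref{3d-decomp} is in the form the volume argument consumes (namely, connected diameter-controlled shells whose indices can be converted to distance). Once that compatibility is in place, the corollary follows immediately from the telescoping argument outlined above.
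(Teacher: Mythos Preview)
Your overall strategy---feed the diameter-controlled shells from Theorem~\ref{3d-decomp} into the volume argument of Section~3 of \cite{Wu2023-FreeBoundaryStable}---is exactly what the paper does; the paper itself gives no further detail than that one-line citation. So at the level of approach you are aligned with the author.

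However, the explicit sketch you give for the distance-to-index conversion is in the wrong direction. You argue that any $q\in \overline{E_{k+1}}\setminus E_k$ can be joined to $p$ by a path of length $\leq 5\pi(k+1)$, and conclude ``$E_k\subset B_{\p M}(p,Ck)$''. First, $E_k$ is an unbounded set, so this inclusion is false as written; presumably you mean $E_1\setminus E_{k+1}\subset B_{\p M}(p,Ck)$. But even with that correction, this is the inclusion you do \emph{not} need. To bound $|B_{\p M}(p,r)\cap E_1|$ by $C'r$ via the telescoping bound $|E_1\setminus E_{n+1}|\leq nV_0$, you must show the \emph{reverse} inclusion $B_{\p M}(p,r)\cap E_1\subset E_1\setminus E_{n(r)+1}$ with $n(r)=O(r)$; equivalently, that points in $E_k$ are at distance $\gtrsim k$ from $p$. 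The diameter bound on the shells alone does not give this---in principle shells of bounded diameter could pile up at bounded distance. The correct input is already present in the \emph{proof} of Theorem~\ref{3d-decomp}: the exhaustion is built from intrinsic distance balls on $\p M$, with $B^{\p M}_{(k-1)\pi+2}(x)\subset Z_k$, hence $E_k\cap B^{\p M}_{(k-1)\pi+2}(x)=\emptyset$. That immediately gives $B^{\p M}(x,r)\cap E_1\subset E_1\setminus E_{\lceil r/\pi\rceil+1}$, which combined with your volume bound $|A_k|\leq V_0$ yields linear growth. Once you cite this feature of the construction (or defer it to \cite{Wu2023-FreeBoundaryStable} as the paper does), the argument closes.
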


\textbf{Acknowledgements.} The author wants to thank Otis Chodosh for his continuous support and encouragement, and for many helpful discussions. The author thanks Chao Li for discussions over the fill-in problem and encouragements through improvements of the first draft. The author thanks Brian White for discussions over the maximum principles and regularity.  The author thanks Shuli Chen and Jared Marx-Kuo for interest in this work.

\section{2D results}\label{2d}

Throughout this paper, if $(N^{k+1},\p N,g), k\geq 1,$ is a Riemannian manifold with boundary, we denote the second fundamental form of $\p N \subset N$ with respect to the outward pointing unit normal $\nu$  as $\sff(X,Y)=-\langle \nabla_X Y, \nu \rangle$ for vector fields $X,Y$  tangent to $\p N$. Then the scalar-valued mean curvature is written as $H=\sum_{i=1}^k -\langle \nabla_{e_i} e_i,\nu\rangle$ for an orthonormal basis $(e_i)_i^k$ of $T_p \p N$  at some point $p\in \p N$. In this convention, the boundary of the unit ball $\B^3$  in $\R^3$ has positive mean curvature, $H_{\p \B^3}=2$.

\begin{proof}[Proof of Theorem \ref{2d-cpt}]
    The $d_{\Sigma}(x,\p \Sigma) \leq 1$ part follows directly from \cite[Proposition 2.1]{li2014compactness}  and \cite[Theorem 1.1]{li2014sharp}. Note that from this result we know that $\Sigma$ is compact if and only if $\p \Sigma$ is compact.  

    \textit{Step 1.} If we know that $\Sigma$ is compact,   we can show that $\p \Sigma$ is connected and the length of $\p \Sigma$ is no more than $2\pi$.

    We apply Gauss Bonnet theorem to get, denoting $\frac{1}{2}R_{\Sigma}=K_{\Sigma}$: 
    \begin{equation}\label{2dGB}
        2\pi \geq 2\pi \chi=\int_{\Sigma} K_{\Sigma}+\int_{\p \Sigma} k_{\p \Sigma}\geq |\p \Sigma|>0.
    \end{equation}
    Here we used $\Sigma$ has at least one boundary component, so $\chi_{\Sigma}\leq 1$. And if $\p \Sigma$ has more than 1 boundary component then we would get $\chi_{\Sigma}\leq 0$, a contradiction.
   So $\p \Sigma$ is connected with $|\p \Sigma|\leq 2\pi$, and $\chi(\Sigma)>0$, so $\chi_{\Sigma}=1$, and $\Sigma$ is a topological disk.

    \textit{Step 2.}
    We now look at the rigidity statement under the assumption that $\Sigma$ is compact (and $\p \Sigma$ is connected by Step 1).

    If $K\geq 0,k\geq 1$ and $|\p \Sigma|=2\pi$, then from (\ref{2dGB}) we must have $K=0, k=1$ everywhere. So $\Sigma$ is locally isometric to the flat unit disk $\D=\{(x_1,x_2), x_1^2+x_2^2\leq 1\}$ , we show that the local isometry can be extended globally.  
    
    We denote the unit speed loop of $\p \Sigma$ as $\gamma:[0,2\pi] \rightarrow\p \D$, and write $\gamma(0)=\gamma(2\pi)=p$. Then by choosing $\epsilon$ small enough, we can find an isometry for each $t\in[0,2\pi]$ with  $\Psi_{t}:B_{\epsilon}(\gamma(t))\rightarrow \D$. Combining $\Psi_t$ with a rotation of $\D$ around origin, we can patch up these isometry $\Psi_t$ to get a global isometry $\Psi_0$  from $B_{\epsilon}(\p \Sigma)$ to a neighborhood of $\p \D$  in $\D$. 
    
    We now define a global isometry $\Psi$ from $\Sigma$ to $\D$  as follows, for $x\in B_{\epsilon}(\p \Sigma)$, $\Psi(x)=\Psi_0(x)$. Now fix $p\in \p \Sigma$ and for any $q\in \Sigma \setminus B_{\epsilon}(\p \Sigma)$, for any path $s(t):(0,1] \rightarrow \Sigma^{\circ}$ with $s(0)=p,s(1)=q$. We cover $s(t)$ with interior balls each of which can be mapped isometrically into an interior ball in $\D$, in a way that agrees on the overlap (see \cite[Theorem 12.4]{lee2018introduction}). Then applying in \cite[Corollary 12.3]{lee2018introduction}, we have obtained a global isometry from $\Sigma$ to $\D$.

    \textit{Step 3.}     
    We now show $\p \Sigma$ is connected (if $\Sigma$ is  non-compact). 
    
    If not, then there are at least two components $\p_1 \Sigma, \p_2 \Sigma$ of $\p \Sigma$. Let's first assume that $\alpha=\inf\left\{d(x,y),x\in \p_1 \Sigma, y \in \p_2 \Sigma\right\}$ can be obtained by a stable free boundary geodesic $\beta_0(t)$. Let $T$ be a unit normal vector field of $\beta_0$, notice that along $\p \Sigma$, $T$ is tangent to $\p \Sigma$, this vector field generates a local variation of $\beta_0$ denoted by $\beta_s(t)$.  Then the second variation formula of length $L(\beta_s)$ implies,  
    \begin{equation*}
        0\leq  \frac{d^2}{ds^2}\Bigg\rvert_{s=0}L(\beta_s)=\int_{\beta_0} -\frac{R_{\Sigma}}{2}+ \int_{\p \beta_0} \langle \nabla_{T} T, -\beta_0' \rangle \leq \langle \nabla_{T} T, -\beta_0'(0) \rangle+\langle \nabla_{T} T, \beta_0'(\alpha) \rangle\leq -2 <0,
    \end{equation*}   
    where we used convexity at the end points, i.e. $\langle \nabla_T T,\beta_0'(0)\rangle=k_{\p\Sigma}(\beta_0(0))\geq 1$ and $\langle \nabla_T T,\beta_0'(\alpha)\rangle=-k_{\p\Sigma}(\beta_0(\alpha))\leq -1$; this is a contradiction.
    
    Now we look at the case $\alpha$ can't be realized by a stable free boundary geodesic. In this case, at least one of the component is not compact.   
    Assume $\p_1 \Sigma$ is non-compact, then we can try to find a minimizing geodesic line via points on $\p_1 \Sigma$. To elaborate, consider the universal cover of $\Sigma$ named $\tilde{\Sigma}$, then we have again (at least) two components of the boundary $\p \tilde{\Sigma}$, denoted $ \p_1 \tilde{\Sigma} \cup \p_2 \tilde{\Sigma}$ 
    and $\p_1 \tilde{\Sigma}$ is not compact. We denote $\p_1 \tilde{\Sigma} \cong \R$ as $\gamma(t)$, with $|\gamma'(t)|=1$, $\gamma(0)=q$, $\gamma(\pm n)=\pm q_n$. Consider the minimizing geodesic in $\tilde{\Sigma}$ from $+q_n$ to $-q_n$ denoted  $l_n(t)$  , which cannot touch the boundary except at end points due to strict convexity. 
    
    We fix a point $ p\in \p_2 \tilde{\Sigma}$ and  a minimizing path from $p$ to $q$,  denoted by $\overline{pq}(t)$. Now any minimizing geodesic in $\tilde{\Sigma}$  between $\pm q_n$ must intersect $\overline{pq}$. Indeed, if there is a minimizing geodesic $l$ between $\pm q_n$ and $l\cap \overline{pq}=\emptyset$, we can minimize distance of $+ q_n$ (similarly for $-q_n$) to $\overline{pq}$, and then use this to build a path transversal to $\overline{pq}$ with intersection number 1. Concatenating with $l,$ we get a loop in $\tilde{\Sigma}$ whose intersection number with $\overline{pq}$ is equal to 1. 
    But this loop is homotopic to the constant loop at e.g. $+q_n$ by simply-connectedness, a contradiction (see section 2.4 in \cite{guillemin2010differential}).

    Now consider a minimizing geodesics $\gamma_n(t)$ connecting $\pm q_n$. First, $|\gamma_n| \rightarrow \infty$, since we can pick $\pm q_n$ to be in $\tilde{\Sigma} \setminus B_{n}(\overline{pq})$ for large $n$.  We look at one of the intersection points of $\gamma_n$ and $\overline{pq}$ named $s_n$ and the velocity vector of $\gamma_n$ at $s_n$ named $v_n$, then $\{(s_n, v_n), n \in \N\}$ is contained in a compact set, and we have a subsequence converging to some point $(s,v)$. The geodesic ray from $s$ with velocity $v$ (respectively $-v$) must have infinite length because we can get $C^1_{\text{loc}}$-convergence
    of these geodesics when $(s_n,v_n) \rightarrow (s,v)$.
    This means that we have found a geodesic line. 

    Now we can use the Toponogov's splitting theorem generalized to manifolds with convex boundary (\cite[Theorem 5.2.2]{burago1977convex}) to conclude that $\tilde{\Sigma} = \R \times I$ where $I$ is a connected one manifold with boundary, this is  a contradiction to the boundary being strictly convex.

    \textit{Step 4.}   We assume $\Sigma$ is not compact and get a contradiction. In particular, we assume the diameter of $\p\Sigma$ is unbounded. 
    By the result of Step 3, we can assume that $\p \Sigma$ is connected. The idea is that if the diameter of the boundary $\p \Sigma$ is large, then we can prescribe a ``capillary'' minimizing geodesic, and use stability inequality to get a contradiction. 
    
    The set up in Step 4 and Step 5 are written in general, not restricting to the ambient manifold being two dimensional.
 We use the notation $B_r(x)$ to denote a ball  of radius $r$ around $x$ using the distance function on $\Sigma$, and $B'_r(x) \subset \p\Sigma$ is a ball of radius $r$ around $x\in \p \Sigma$ using the distance function $\p \Sigma$ with respect to the induced metric.

    Assume $w:\p\Sigma \rightarrow[-1,1]$ is the following Lipschitz function, for some fixed $x_0 \in \p\Sigma$,
\begin{equation*}
w(x) = \begin{cases}
-1 & x\in B_2'(x_0)\\
\cos\rho(x) & -1<w(x)<1 \\
1 & x\in \p\Sigma \setminus B_{2\pi}'(x_0)
\end{cases}
\end{equation*}

    Here   $\rho(y):\p \Sigma\rightarrow \R$ is a smooth function with$|\nabla \rho|<1$. We denote $W_{\pm}:=\{x\in \p\Sigma: w(x)=\pm 1\}$ 
    and require the set $\{x\in \p \Sigma: \rho(x)=\pi\}=\p W_-$ and the set $\{x\in \p \Sigma: \rho(x)=0\}=\p W_+$ to be smooth submanifolds.
    
     Now we minimize the functional in (\ref{capigeo}) among open sets with finite perimeter, containing the set $B'_{2}(x_0)$.
    Take such a Caccioppoli set $\Omega$ and let, 
    \begin{equation}\label{capigeo}
        A(\Omega)=\mathcal{H}^1(\p \Omega) +\int_{\Omega \cap \p \Sigma} w,
    \end{equation}
    where $\mathcal{H}^1(\p\Omega)$ is the perimeter of $\p \Omega$ in $\Sigma$.

    We note that if a smooth minimizer $\p \Omega \neq \emptyset$ exists, then for each component $\gamma$ of $\p \Omega$ with $\p \gamma \neq \emptyset$, using the second variation formula \cite{ros1997stability}, we have over the minimizing geodesic $\gamma$, 
\begin{align*}
    A''(\gamma)=&\int_{\gamma} -\phi\Delta_{\gamma} \phi -(\Ric(N,N)+|\sff_{\gamma}|^2)\phi^2 \\
    &+\int_{\p \gamma} \frac{\phi^2}{\sin\rho}\langle  \nabla_{\bar{\nu}} \bar{\nu},\bar{N}\rangle+\frac{\phi^2}{\sin^2 \rho} \nabla_{\bar{\nu}} w +\phi \nabla_{\nu} \phi + \phi^2 \cot \rho \langle N,\nabla_{\nu} \nu \rangle,
\end{align*}
where $\overline{N}$ (respectively $N$) is the outward unit normal of $\p \Sigma \subset \Sigma$ (respectively $\p \Omega \subset \Omega$), and $\overline{\nu}$ (respectively $\nu$) is the outward unit normal of $\p\Omega\cap \p \Sigma \subset \Omega \cap \p \Sigma$ (respectively $\p \gamma \subset \gamma$).     

We can plug in $\phi=1$, use $\sff_{\gamma}=0=\langle N ,\nabla_{\nu} \nu \rangle$, and  use the Gauss Equation,
\begin{align*}
    0&\leq A''(\gamma)=\int_{\gamma} -\frac{1}{2}R_{\Sigma}+\int_{\p\gamma}\frac{1}{\sin \rho}(\langle  \nabla_{\bar{\nu}} \bar{\nu},\overline{N}\rangle-\nabla_{\nu} \rho)\\
     &< \int_{\p \gamma} \frac{1}{\sin\rho}\left( -1+1 \right) =0,
\end{align*}
using $|\nabla \rho|< 1$ and $-k_{\p \Sigma}=\langle  \nabla_{\overline{\nu}} \overline{\nu},\overline{N}\rangle\leq -1$, leading to a contradiction.

\textit{Step 5.} In this step we write down some technical details needed to show that a smooth minimizer exists (used in Step 4). 
    
If $\Omega$ is a candidate in a minimizing sequence of $A$, by assumption  we have $\p\Omega \cap B'_{2}(x)=\emptyset$. 
In fact we can  find an open neighborhood $\Omega'$ of $W_-=\{x\in\p\Sigma: w(x)=-1\}$ in $\Sigma$, so that any minimizing sequence must contain $\Omega'$. 

To elaborate, we assume without loss of generality each $\Omega$ in the minimizing sequence has smooth boundary, now
we show that for some choice of $\Omega'$, 
    $$\delta(\Omega):=A(\Omega\cup \Omega')-A(\Omega)=\mathcal{H}^1(\p \Omega' \setminus \Omega)-\mathcal{H}^1(\p \Omega \cap \Omega')+\int_{\p \Sigma \cap \left(\Omega'\setminus\Omega\right)}w\leq 0.$$

We consider the following family $\Phi_t(x):=\exp(-\varphi_t(x)\overline{N}(x))$; recall $\overline{N}$ is the outward pointing unit normal of $\p \Sigma \subset \Sigma$. Here $\varphi_t(x):=\max\{s\phi(x)+t,0\}$ for some small $s>0$ to be chosen, and $\phi:\p\Sigma \rightarrow [-1,1]$ is a smooth function, such that $\{x\in \p \Sigma: \phi(x)>0\}=W_-^{\circ}$, and $\nabla \phi(x)\neq 0$ for any $x\in \p W_-$. Note that $\Gamma_t:=\overline{\Phi_t(\p \Sigma) \setminus \p \Sigma}$ is a smooth submanifold in $\Sigma$, and as $s,t \rightarrow 0$, $\Gamma_t$ converges to a smooth domain in $\p \Sigma$. We denote the unit normal of $\Gamma_t$ by  $\nu_t$,  pointing in the direction as $t$ increases. Then since $\nabla \phi(x)\neq 0$ for any $x\in \p W_-$, we have $\nu_0(x)\cdot \overline{N}(x)>-1=w(x)$. We pick $s'$ small enough, so that for any $t\in[-s,s']$ and $x\in \Gamma_t\cap \p \Sigma$, $\nu_t(x)\cdot \overline{N}(x)\geq w(x)$. Also because $\Gamma_t$ converges to a smooth domain in $\p \Sigma$ as $s,t \rightarrow 0$, using $k_{\p \Sigma}\geq 1$, we know that $\dive_{\Gamma_t} (\nu_{t})\leq -0.5$.


    Let $\Omega':=\cup_{t\in[-s,s']} \Gamma_t$ be the union of these ``foliation'', containing a tubular neighborhood of $W_-$. 
    By divergence theorem for Lipschitz domains we have,
    \begin{align*}
        \delta(\Omega)&\leq\int_{\p \Omega' \setminus \Omega} \nu_t \cdot \nu_{\p \Omega'}-\int_{\p \Omega \cap \Omega'}\nu_t\cdot \nu_{\p \Omega} +\int_{\p \Sigma \cap \left(\Omega'\setminus\Omega\right)}w\\
        &=\int_{\Omega'\setminus\Omega}\dive_{\Gamma_t}(\nu_t)+\int_{\p \Sigma \cap \left(\Omega'\setminus\Omega\right)} \nu_t \cdot (-\overline{N}) + w\\
        &\leq \int_{\Omega'\setminus\Omega}\dive_{\Gamma_t}(\nu_t) \leq 0
    \end{align*}
    where we used $\nu_{t}\cdot \overline{N}\geq w(x)$ for any $x\in \Gamma_t\cap \p\Sigma$ and $\dive_{\Gamma_t}(\nu_t)\leq -0.5$ for $t\in[-s,s']$. We get that any minimizing sequence must (eventually)  contain  $\Omega'$.

    We obtained for any minimizing sequence $\gamma_i =\p \Omega_i$, we have  $\gamma_i \supset \Omega' \supset   W_-$ eventually. A similar argument shows that $\gamma_i \cap W_+ =\emptyset$, so the term $\int_{\p \Sigma \cap \Omega_i} w$ is uniformly bounded for a minimizing sequence,  so $\mathcal{H}^1(\gamma_i)$  is also uniformly bounded.  We get that the minimizing sequence must be contained in a bounded set from the point $x$.
    We can continue the minimization with standard BV compactness and regularity theory (see \cite{philippis2015regularity}).

    We now check that the smooth minimizer $\gamma=\p \Omega$  must have nonempty boundary. Note $B'_{2}(x)  \subset\Omega'\subset \Omega$, and $\Omega$ is disjoint from the set $W_+$.   So if $\gamma$ is a minimizer with empty boundary, then $\Omega \cap \p \Sigma$ and $\p \Sigma \setminus {\Omega}$ is two disjoint nonempty open sets, whose union is $\p \Sigma$, a contradiction to $\p \Sigma$ being connected.
\end{proof}

\section{3D results}\label{3d}

Contrary to the 2-dimensional case, a 3-manifold with nonnegative scalar curvature and uniformly mean convex boundary, might not have connected boundary. We also might not have that any point is at bounded distance away from the boundary (using the same method would require non-negative Ricci curvature).

Before we start the proof of Theorem \ref{3d-decomp}, we need the following lemma which is analogous to Lemma 16 in \cite{chodosh2024generalized}. The inequality (\ref{PSCu}), when compared to the requirement of \cite{chodosh2024generalized} in the PSC setting, is suitable for nonnegative scalar curvature. And we added the assumption (\ref{BCu}), which is suitable for mean-convexity.

\begin{lemma} \label{DistBound}
    If there is a smooth function $u>0$ over a compact surface $(\Sigma,\p \Sigma)$ with nonempty boundary, such  that,
    \begin{align}
        \Delta_{\Sigma} u \leq \frac{R_{\Sigma}}{2} u +\frac{|\nabla_{\Sigma}u|^2}{2u} \quad &\text{over } \Sigma, \label{PSCu}\\ 
        \frac{\nabla_{\nu} u}{u} \geq a_0-k_{\p \Sigma} \quad &\text{over } \p \Sigma, \label{BCu}
    \end{align}  
    then $d(x,\p \Sigma) \leq \frac{2}{a_0}$ for any $x \in \Sigma$; the outward pointing unit normal along $\p \Sigma$ is denoted as $\nu$, $R_{\Sigma}$ is the scalar curvature of $\Sigma$ and $k_{\p \Sigma}$ is the geodesic curvature of $\p \Sigma$.     
\end{lemma}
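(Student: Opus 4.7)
First I would substitute $v := u^{1/2}$, which converts the hypotheses into the cleaner pair
\[\Delta v \leq \tfrac{R}{4} v = \tfrac{K}{2} v \quad \text{in } \Sigma, \qquad \tfrac{\nabla_\nu v}{v} \geq \tfrac{a_0 - k_{\partial \Sigma}}{2} \quad \text{on } \partial \Sigma,\]
using $R_\Sigma = 2K_\Sigma$ in two dimensions. Arguing by contradiction, I would assume some $x_0 \in \Sigma$ satisfies $d(x_0, \partial \Sigma) > 2/a_0$, pick a smooth unit-speed curve $\gamma: [0, L] \to \Sigma$ from $p \in \partial \Sigma$ to $x_0$ with $L > 2/a_0$ and $\gamma'(0) = -\nu$, and let $j(t)$ be the orthogonal Jacobi field along $\gamma$ generated by the normal exponential flow off $\partial \Sigma$: $j'' + K j = 0$, $j(0) = 1$, $j'(0) = -k_{\partial \Sigma}(p)$, with $j > 0$ on $[0, L]$ by minimality.

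The heart of the argument is a one-dimensional Riccati inequality. Using the PDE $\Delta v \leq \tfrac{K}{2} v$ to control $v$ along $\gamma$ (after absorbing the transverse Hessian contribution, see below), one derives
\[(j H')' + j (H')^2 \leq -\tfrac{j''}{2}, \qquad H(t) := \log v(\gamma(t)).\]
Setting $\alpha(t) := H'(t) + \tfrac{j'(t)}{2 j(t)}$ and rearranging directly gives $\alpha' + \alpha^2 \leq -\tfrac{(j')^2}{4 j^2} \leq 0$. The boundary condition yields $H'(0) = -\tfrac{\nabla_\nu v}{v}(p) \leq \tfrac{k_{\partial \Sigma}(p) - a_0}{2}$, so $\alpha(0) \leq \tfrac{k - a_0}{2} - \tfrac{k}{2} = -\tfrac{a_0}{2}$. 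Comparison with the model ODE $\alpha_c' = -\alpha_c^2$, $\alpha_c(0) = -\tfrac{a_0}{2}$, whose solution $\alpha_c(t) = -\tfrac{1}{2/a_0 - t}$ blows up at $t = 2/a_0$, forces $\alpha(t) \to -\infty$ at some $t^* \leq 2/a_0 < L$. Since $j > 0$ on $[0, L]$ the correction $j'/(2j)$ stays bounded, so $H'(t^*) = -\infty$, contradicting smoothness and positivity of $v$ along $\gamma$.

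The principal obstacle is the pointwise reduction of $\Delta v$ along $\gamma$: in Fermi coordinates $(t, s)$ around $\gamma$, the 2D Laplacian carries a transverse contribution $j^{-2} v_{ss}|_{s=0}$ whose sign is not controlled a priori. I would handle this by selecting $\gamma$ not as an arbitrary $g$-geodesic but as a smooth minimizer of the $v$-weighted length $\int_\gamma v \, ds$ among rectifiable paths separating $x_0$ from $\partial \Sigma$; existence and smoothness follow from the same BV compactness and capillary regularity arguments deployed in Step~5 of the proof of Theorem~\ref{2d-cpt}. Along such a minimizer, the first-order condition constrains $\nabla v$ to be parallel to $\gamma'$ and the non-negativity of the second variation supplies the correct sign on the transverse Hessian term, so that the reduced inequality $(jH')' + j(H')^2 \leq -\tfrac{j''}{2}$ holds in the strong pointwise sense. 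This construction mirrors Chodosh-Li's strategy for the analogous distance estimate in the PSC $\mu$-bubble setting (\cite{ChodoLi2020-GeneralizedSoapBubbles}, Lemma~16), adapted here to the mean-convex boundary framework.
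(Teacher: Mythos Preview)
Your argument has a genuine gap at exactly the point you flag as the ``principal obstacle,'' and the proposed fix does not close it. In the first two paragraphs you set up a Riccati comparison along a curve $\gamma:[0,L]\to\Sigma$ running from $p\in\partial\Sigma$ to $x_0$, with $j$ the orthogonal Jacobi field generated by the normal exponential map off $\partial\Sigma$; this forces $\gamma$ to be a $g$-geodesic meeting $\partial\Sigma$ orthogonally (otherwise $j''+Kj=0$ with $j'(0)=-k_{\partial\Sigma}(p)$ has no meaning). In the third paragraph you then propose to replace $\gamma$ by a minimizer of $\int_\gamma v\,ds$ among curves \emph{separating} $x_0$ from $\partial\Sigma$. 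But a separating curve in a surface is closed (or has both endpoints on $\partial\Sigma$); it is not a ray from $\partial\Sigma$ to $x_0$, so the Jacobi data $j(0)=1$, $j'(0)=-k_{\partial\Sigma}(p)$ and the parameter $t\in[0,L]$ lose their interpretation, and the entire ODE framework collapses. Even if you instead meant a $v$-weighted geodesic from $\partial\Sigma$ to $x_0$, the Euler--Lagrange equation is $v\kappa_\gamma=\nabla_N v$, not $\nabla_N v=0$: the gradient of $v$ is \emph{not} forced to be parallel to $\gamma'$, the curve is no longer a $g$-geodesic, the equation $j''+Kj=0$ fails, and the second variation does not simply hand you $v_{ss}\geq 0$. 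So the key inequality $(jH')'+j(H')^2\leq -j''/2$ is never actually established.

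The paper does pursue the separating-curve idea, but coherently and without any one-dimensional Riccati reduction. One minimizes the warped functional
\[
\mathcal{F}(\Gamma)=\int_{\partial\Gamma} u \;-\; \int_{\Sigma} hu\,(\chi_\Gamma-\chi_{\Gamma_0})
\]
over Caccioppoli sets $\Gamma$ containing a neighborhood of the far point $z$ and disjoint from $\partial\Sigma$, where $h$ is a smoothing of $2/(\alpha - d(\cdot,\partial\Sigma))$ with $\alpha=\tfrac{2}{a_0}+\delta$, arranged so that $h|_{\partial\Sigma}<a_0$, $h=+\infty$ outside an $\alpha$-neighborhood of $\partial\Sigma$, and $\tfrac12 h^2+\nabla_\nu h>0$ everywhere. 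The boundary hypothesis \eqref{BCu} enters as a barrier (via a foliation argument) forcing the minimizer $\gamma=\partial\Gamma$ to stay in the interior. The second variation of $\mathcal{F}$, after inserting \eqref{PSCu} and the sign condition on $h$, yields
\[
0 \;<\; \int_\gamma |\nabla_\gamma\psi|^2\,u \;+\; \nabla_\gamma\psi^2\cdot\nabla_\gamma u,
\]
and the test function $\psi=u^{-1/2}$ gives the contradiction $0<-\tfrac34\int_\gamma u^{-2}|\nabla_\gamma u|^2$. The prescribed-curvature function $h$ is doing the work you wanted the Jacobi field $j$ to do, but inside a well-posed two-dimensional variational problem rather than as an ODE along an undetermined curve.
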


\begin{proof}
    If not, assume $d(z,\p \Sigma )\geq \frac{2}{a_0}+2\delta$ for some $z \in \Sigma$ and $0<\delta<1$.   We then  find a minimizer of the following functional $\mathcal{F}(\Gamma)$  for sets with finite perimeters $\Gamma \subset \Sigma$ containing a neighborhood $U_0$   of $z$ (will be chosen later) and disjoint from $\p \Sigma$. Denote $\p \Gamma=\gamma$, and $\nu_{\gamma}$ the outward unit normal of $\gamma\subset \Gamma$,  
\begin{align*}
\mathcal{F}(\Gamma)=\int_{\gamma}u-\int_{\Sigma}hu(\chi_{\Gamma}-\chi_{\Gamma_0}),
\end{align*}  
here $h(y)$ is a mollification of the function $\hat{h}(y)= \frac{2}{\alpha-d_{\Sigma}(y,\p \Sigma)}>0$ when $x\in U$; here $\alpha=\frac{2}{a_0}+\delta$ and  $U:=\{y\in \Sigma,d(y,\p \Sigma) < \alpha\}$. We require $h\rvert_{\p \Sigma}<a_0, h\rvert_{\Sigma \setminus U}=\infty$, and $\frac{1}{2}h^2-|\nabla h| >0$ everywhere on $U$. We pick $\Gamma_0$ to be an open neighborhood of $z$ with smooth boundary and $h\rvert_{\Sigma \setminus \Gamma_0}\in L^{\infty}$. So for any smooth open set $\Gamma$, $\mathcal{F}(\Gamma)>-\infty$.

By the proof of Proposition 12 in \cite{chodosh2024generalized}, there is a smooth open neighborhood $U_0$ around $z$ such that  $\mathcal{F}(\Gamma\cup U_0)\leq \mathcal{F}(\Gamma)$  for any $\Gamma$ with smooth boundary and $h\rvert_{\Sigma \setminus U_0} \in L^{\infty}$.  This implies that any minimizing sequence must contain $U_0$ and $\inf \mathcal{F} > -\infty$.

We now check that any minimizing sequence must be disjoint from a fixed open neighborhood of $\p \Sigma$. So by interior regularity we have that a smooth minimizer exists \cite{tamaninni1984regularity} \cite{chodosh2024generalized}.

By first variation, if a smooth minimizer $\Gamma$ exists then its boundary $\gamma$ has geodesic curvature equal to $k_{\gamma}=h-\frac{\nabla_{\nu}u}{u}$. If $\gamma$ touches the boundary $\p \Sigma$ at some point $x$, then, 
\begin{align}\label{strongmaxi}
    k_{\gamma}(x)&=h\rvert_{\p\Sigma}(x)-\frac{\nabla_{\nu}u}{u} (x)\nonumber\\
    &\leq h\rvert_{\p\Sigma} (x)-a_0+k_{\p \Sigma}(x) <k_{\p \Sigma},
\end{align}
where we used (\ref{BCu}) and $h\rvert_{\p \Sigma}<a_0$.
Using this observation, similar to Step 5 in Theorem \ref{2d-cpt}, we  use a foliation along $\p \Sigma$ to modify our minimizing sequence so that it is disjoint from a fixed open neighborhood of $\p \Sigma$. 

To be precise, given any smooth minimizing sequence $\gamma_i=\p\Gamma_i$ , we show that $\mathcal{F}(\Gamma_i\setminus T') \leq \mathcal{F}(\Gamma_i)$, where $T'= \cup_{t\in[0,\epsilon],z\in \p \Sigma}\exp_z(-t\nu)$ for some small $\epsilon$ to be decided. If we fix $t\in [0,\epsilon]$, then we get $T'_t=\cup_{z\in\p\Sigma} \exp_z(-t\nu)$ a smooth curve with unit normal $\nu_t$ (pointing in the direction as $t$ increases), and that as $t \rightarrow 0, \nu_t \rightarrow -\nu$. We denote the outward pointing unit normal of $\p T' \subset T'$ and $\p \Gamma_i \subset \Gamma_i$ as $\nu_{\p T'}$ and $\nu_{\p \Gamma_i}$.  Then,
\begin{align*}
    \mathcal{F}(\Gamma_i\setminus T') - \mathcal{F}(\Gamma_i) &=\int_{\p T'\cap\Gamma_i} u+\int_{\p \Gamma_i \cap T'} u\cdot(-1)+\int_{\Gamma_i \cap T'} hu \\
    &\leq \int_{\p T' \cap \Gamma_i} u (-\nu_t) \cdot \nu_{\p T'}+ \int_{\p \Gamma_i \cap T'} u\nu_t \cdot \nu_{\p \Gamma_i}+\int_{\Gamma_i \cap T'} hu\\
    &=\int_{T' \cap \Gamma_i} \dive^{\Sigma}(u\nu_t)+hu\\
    &=\int_{T' \cap \Gamma_i} u\dive^{\Sigma}(\nu_t)+\nabla_{\nu_t} u+hu\\
    &=\int_{T' \cap \Gamma_i} u(\dive^{\Sigma}(\nu_t)+\frac{\nabla_{\nu_t} u}{u}+h)\leq 0
\end{align*}
in the final inequality we used (\ref{strongmaxi}). Indeed, as $t \rightarrow 0$, $\nu_t \rightarrow -\nu$,  $\dive(\nu_t) \rightarrow -k_{\p\Sigma}$, so by (\ref{strongmaxi}), $$h-\frac{\nabla_{-\nu_t} u}{u}+\dive(\nu_t)<0.$$  
So any minimizing sequence must be disjoint from $T'$ provided we choose $\epsilon$ small enough. 

We now write out the second variation formula for a minimizer of $\mathcal{F}(\Gamma)$. This was derived in Theorem 6.3 of \cite{wu2023free} (see also \cite{chodosh2024generalized} Lemma 16),
\begin{align*}
    0 &\leq \int_{\gamma} |\nabla_{\gamma} \psi|^2 u -\frac{1}{2}R_{\Sigma}\psi^2 u-\frac{1}{2}k^2_{\gamma}\psi^2 u +(\Delta_{\Sigma}u-\Delta_{\gamma} u)\psi^2 -\langle\nabla_{\Sigma}u, \nu_{\gamma}\rangle \psi^2 h -\langle\nabla_{\Sigma }h, \nu_{\gamma} \rangle \psi^2 u \nonumber\\
    &\leq  \int_{\gamma} |\nabla_{\gamma} \psi|^2 u - (\Delta_{\gamma}u)\psi^2 -(\frac{h^2}{2}+\nabla_{\nu_{\gamma}}h)\psi^2 u \nonumber+\frac{\psi^2(\nabla_{\gamma} u)^2}{2u}\\
    &< \int_{\gamma} |\nabla_{\gamma} \psi|^2 u +\nabla_{\gamma} \psi^2 \nabla_{\gamma} u +\frac{\psi^2(\nabla_{\gamma} u)^2}{2u} \quad (\star )
\end{align*}
where we used (\ref{PSCu}) in the second inequality and also $\frac{k_{\gamma}^2}{2}=\frac{h^2}{2}+\frac{(\nabla_{\nu_{\gamma}}u)^2}{2u}-\frac{h\nabla_{\nu_{\gamma}}u}{u}$. To get the strict inequality in ($\star$) we used $\frac{1}{2}h^2+\nabla_{\nu_{\gamma}}h >0$. 
Then we can plug  $\psi =u^{-\frac{1}{2}}$ into ($\star$) to get $0<\int_{\gamma}\frac{-1}{4u^2}(\nabla_{\gamma}u)^2$, a contradiction. So $d_{\Sigma}(x,\p\Sigma)\leq \frac{2}{a_0}$ as claimed.
\end{proof}

\begin{remark}\label{RMchange}
    One can check that in the above proof, it's sufficient to have equation (\ref{PSCu}): $\Delta_{\Sigma} u \leq \frac{R_{\Sigma}}{2} u +\frac{|\nabla_{\Sigma}u|^2}{2u}$ over the set $U:=\{y\in \Sigma, d(y,\p \Sigma)<\frac{2}{a_0}+3\delta\}$ instead of requiring it everywhere in $\Sigma$, because (\ref{PSCu}) is only used in the step before ($\star$) over the minimizer $\gamma$, which must lie in $U$ by construction.    
\end{remark}

Before proving Theorem \ref{3d-decomp}, we start with the simple case which allows us to find capillary surfaces in a compact manifold with sufficiently large boundary diameter.

\begin{lemma}\label{3d1bubble}
    If $(V,\p V, g)$ is a connected compact 3-manifold with $R_V \geq 0$, the mean curvature of the boundary satisfies $ H_{\p V}\geq \frac{\pi}{d_0}+a_0$ for some $a_0>0$ , and $\p V$ is connected with intrinsic $\text{diam}(\p V) >d_0>0$. Then there are finitely many capillary surfaces $(\Sigma_i)_{i=1}^k$ with nonempty boundary of bounded length:  $|\p \Sigma_i| \leq \frac{2\pi}{a_0}$, and $d_{\Sigma_i}(x,\p \Sigma_i)\leq \frac{2}{a_0}$ for any $x\in \Sigma_i$. Furthermore, each $\Sigma_i$ is a topological disc, and $\cup_{i=1}^k \Sigma_i$  separates $\p V$.
\end{lemma}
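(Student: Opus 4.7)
The plan is to carry out a capillary $\mu$-bubble construction in $V$, mirroring Steps~4--5 of the proof of Theorem~\ref{2d-cpt} one dimension up. First, pick $p_{\pm} \in \p V$ with $d_{\p V}(p_-,p_+) > d_0$, and let $\rho$ be a smoothing of $\tfrac{\pi}{d_0}\,d_{\p V}(p_-,\cdot)$ clipped to $[0,\pi]$, arranged so that $\rho \equiv 0$ in a neighbourhood of $p_-$, $\rho \equiv \pi$ in a neighbourhood of $p_+$, and $|\nabla_{\p V} \rho| < \pi/d_0$ throughout. Set $w = -\cos \rho$ on the bridge $\{0<\rho<\pi\}$, extended so that $|w|>1$ outside; then $w$ will act as the prescribed cosine of the capillary contact angle.

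Now minimize
\[
\mathcal{A}(\Omega) \;=\; \mathcal{H}^2\bigl(\p^*\Omega \cap V^{\circ}\bigr) \;+\; \int_{\Omega \cap \p V} w \, d\mathcal{H}^2
\]
among Caccioppoli sets $\Omega \subset V$ containing a fixed small neighbourhood $U_0$ of $p_-$. Strict mean-convexity of $\p V$ together with the foliation argument of Step~5 of Theorem~\ref{2d-cpt} confines any minimizing sequence away from fixed neighbourhoods of $p_{\pm}$, and the condition $|w|>1$ outside the bridge forces the boundary trace of the minimizer to lie inside the bridge. Interior and capillary boundary regularity then give a smooth minimizer; by the first variation $\Sigma := \p^*\Omega \cap V^{\circ}$ is minimal in $V$ and meets $\p V$ with contact angle $\theta = \rho$ (so $\cos\theta = -w$ and $\sin\theta = \sin\rho$).

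Next I would apply the stability scheme of Lemma~\ref{DistBound}. The second variation for capillary surfaces with variable prescribed angle---the three-dimensional analogue of the Ros--Souam formula recalled in Step~4 of Theorem~\ref{2d-cpt}---defines a Jacobi operator on each connected component $\Sigma_i$ of $\Sigma$, and stability produces a positive first eigenfunction $u>0$. Using $H_{\Sigma_i}=0$, the Gauss equation, and $R_V \geq 0$, the interior inequality reduces to $\Delta_{\Sigma_i} u \leq \tfrac{R_{\Sigma_i}}{2}u$, which implies (\ref{PSCu}). The Robin-type boundary condition for $u$ involves $H_{\p V}$, the capillary factors coming from $\rho$, and the geodesic curvature $k_{\p\Sigma_i,\Sigma_i}$; the quantitative input
\[
H_{\p V} \;\geq\; \tfrac{\pi}{d_0} + a_0 \;\geq\; |\nabla_{\p V}\rho| + a_0
\]
absorbs the $\pi/d_0$ slack arising from the varying angle and leaves exactly $\nabla_\nu u / u \geq a_0 - k_{\p\Sigma_i,\Sigma_i}$, i.e.\ (\ref{BCu}). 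Lemma~\ref{DistBound} then gives $d_{\Sigma_i}(x,\p\Sigma_i) \leq 2/a_0$, so each $\Sigma_i$ is compact and meets $\p V$.

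For the boundary-length bound I plug $\phi = 1$ into the second variation. By the Gauss equation and $R_V\geq 0$, the bulk term is controlled by $\int_{\Sigma_i} K_{\Sigma_i}$, which by Gauss--Bonnet equals $2\pi\,\chi(\Sigma_i) - \int_{\p\Sigma_i} k_{\p\Sigma_i,\Sigma_i}$. The capillary boundary term from the second variation reproduces the geodesic curvature of $\p\Sigma_i$ in $\Sigma_i$ (which cancels) plus an excess $-a_0$ coming from the same cancellation $H_{\p V} - |\nabla\rho| \geq a_0$. Rearranging yields
\[
a_0\,|\p \Sigma_i| \;\leq\; 2\pi\,\chi(\Sigma_i)\;\leq\; 2\pi,
\]
so $|\p \Sigma_i| \leq 2\pi/a_0$ and each $\Sigma_i$ is a topological disk ($\chi(\Sigma_i)>0$). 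Finiteness of the collection $\{\Sigma_i\}$ follows from the uniform area bound inherited from the minimization on the compact $V$, and separation of $\p V$ is automatic since $\Omega$ contains $U_0 \ni p_-$ but is disjoint from a neighbourhood of $p_+$. The part I expect to be the main obstacle is the careful bookkeeping of the capillary second variation with variable angle---matching the $1/\sin\rho$ and $\cot\rho$ boundary contributions so that precisely $H_{\p V} - |\nabla\rho|$ survives, in direct analogy with the $(-1+1)$ cancellation in Step~4 of Theorem~\ref{2d-cpt} but now with the additive slack~$a_0$.
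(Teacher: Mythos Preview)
Your proposal is correct and follows essentially the same approach as the paper's proof: the same capillary functional with prescribed angle $w=\cos\rho$ built from a smoothed distance function on $\p V$, the same foliation/barrier arguments to trap the minimizer, the same use of the stability eigenfunction $u$ to feed into Lemma~\ref{DistBound}, and the same $\phi=1$ plus Gauss--Bonnet argument for the boundary-length and disk-type conclusions. The one piece of bookkeeping you flag as the main obstacle is handled in the paper via Li's identity (equation~(3.8) in \cite{Li2019-PolyhedronComparisonTheorem}),
\[
-H_{\p V}=\langle \nabla_{\bar{\nu}}\bar{\nu},\bar{N}\rangle + \cos\rho\,\langle N,\nabla_{\nu}\nu\rangle + \sin\rho\,\langle \nabla_{\gamma'}\gamma',\nu\rangle,
\]
which is exactly what converts the capillary boundary term into $\frac{1}{\sin\rho}(H_{\p V}-\nabla_{\bar\nu}\rho)-k_{\p\Sigma}$ and produces the clean inequality $\nabla_\nu u/u \ge a_0 - k_{\p\Sigma}$ you anticipate.
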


\begin{proof}
    Let $\text{diam}(\p V)=d_{\p V}(p,q)>d_0 + 5 \delta$, for some $\delta>0$ and $p,q \in \p V$. Then we can build a smooth function $w:\p V \rightarrow \R$ such that $w(x)=-1$ if $d_{\p V}(p,x)\leq 2\delta$ and $w(x)=1$ if $d_{\p V}(p,x)\geq d_0+3\delta$, and when $-1<w(x)<1$, then $w(x)=\cos \rho(x)$ with $|\nabla^{\p V} \rho|<\frac{\pi}{d_0}$.

    We consider a minimizer of the following functional  among open sets $\Omega$ with finite perimeters, containing $B^{\p V}_{2\delta}(p)$ and disjoint from the set $\p V \setminus B^{\p V}_{d_0+3\delta}(p)$,
\begin{align*}
    \mathcal{A}(\Omega)=|\p \Omega|+\int_{\p V \cap \Omega} w.
\end{align*} 
\textit{Claim:}
Any minimizing sequence $\Omega_i$  must (eventually) contain a fixed open neighborhood around $W_-:=\{x \in \p V:w(x)=-1\}$ and (eventually) be disjoint from some fixed open neighborhood of $W_+:=\{x \in \p V:w(x)=1\}$; the boundary  of $\p \Omega$, i.e. $\p \Omega \cap \p V$, of a minimizer $\Omega$, must lie in the set $\{x\in \p V: |w(x)|< 1\}$. We remark that the regularity for capillary surfaces (see \cite{philippis2015regularity},\cite{chodosh2024improved}) requires that $\p \Omega \cap \p V   \subset\{x\in \p V: |w(x)|< 1\}$.

The proof of these two claims is analogous to Step 5 of Theorem \ref{2d-cpt}. Let $\overline{N}$ be the outward unit normal of $\p V \subset V$, we consider the following family $\Phi_t(x):=\exp(-\varphi_t(x)\overline{N}(x))$. Here $\varphi_t(x):=\max\{s\phi(x)+t,0\}$ for some small $s>0$ to be chosen, and $\phi:\p V \rightarrow [-1,1]$ is a smooth function, such that $\{x\in \p V: \phi(x)>0\}=W_-^{\circ}$, and $\nabla \phi(x)\neq 0$ for any $x\in \p W_-$.
The same argument as in Step 5 of Theorem \ref{2d-cpt} shows that 
each slice $\Sigma_t:= \overline{\Phi_t( \p V) \setminus \p V}$ is a smooth surface with unit normal $\nu_t$ and mean curvature $H_t\geq 1.5$ and $\nu_t \cdot \overline{N}(x)\geq w(x)$ also holds for $x\in \Sigma_t \cap \p V$ for small $t>0$.
We consider the foliation  
$$\Omega'=\cup_{t\in [-s,s']} \Sigma_t$$ 
 
Then the same computation shows that,
\begin{align*}
    \mathcal{A}(\Omega'\cup \Omega)-\mathcal{A}(\Omega)&\leq\int_{\p \Omega' \setminus \Omega} \nu_t \cdot \nu_{\p \Omega'}-\int_{\p \Omega \cap \Omega'}\nu_t\cdot \nu_{\p \Omega} +\int_{\p V \cap \left(\Omega'\setminus\Omega\right)}w\\
    &=\int_{\Omega'\setminus\Omega}\dive_{\Sigma_t}(\nu_t)+\int_{\p V \cap \left(\Omega'\setminus\Omega\right)} \nu_t \cdot (-\overline{N}) + w\\
    &\leq \int_{\Omega'\setminus\Omega}\dive_{\Sigma_t}(\nu_t) \leq 0.
\end{align*}

We finished the proof of the claim.
So we find a minimizing Caccioppoli set $\Omega$ with smooth boundary $\Sigma=\p \Omega \neq \emptyset$ since $V$ is connected, and $\p \Sigma \neq \emptyset$ since $\p V$ is connected.  
We note that $\p \Omega$ many have many components, some of which are closed surface with no boundary. We want to examine each component $\Sigma$ with non-empty boundary below.

First we want to apply Lemma \ref{DistBound} to get the distance bound. We write $\sqrt{1-w^2}=\sin \rho \neq 0$.
We have the first and second variation formulas over $\Sigma$ (\cite{ros1997stability}); here we also used the Gauss-Codazzi equation $R_V=R_{\Sigma}+2\Ric(N,N)+|\sff_{\Sigma}|^2-H_{\Sigma}^2$ in the second variation: 
\begin{align}
    H_{\Sigma}&=0, \quad \langle \nu,\overline{\nu} \rangle=-w;\nonumber\\
    0&\leq \int_{\Sigma} -\phi \Delta_{\Sigma}\phi-\frac{1}{2}({R_V}-R_{\Sigma}+|\sff_{\Sigma}|^2+H_{\Sigma}^2)\phi^2\nonumber\\
    &+\int_{\p\Sigma}\frac{\phi^2}{\sin\rho} (\langle \nabla_{\bar{\nu}}\bar{\nu},\bar{N}\rangle+\nabla_{\bar{\nu}} \rho +\cos\rho\langle N,\nabla_{\nu}\nu \rangle)+\phi \nabla_{\nu}\phi. \label{2VarCapi}
\end{align} 
where $\overline{\nu}$ is the outward unit normal of $\p \Sigma  \subset \left(\overline{\Omega} \cap \p V\right)$, $\nu$ is the outward unit normal of $ \p \Sigma\subset \Sigma$, $N$ is the outward unit normal of $\Sigma \subset \overline{\Omega}$.

Now by (\ref{2VarCapi}) there is a smooth $u>0$ over $\Sigma$ with, 
\begin{align}
    \Delta_{\Sigma}u+\frac{1}{2}(R_V-R_{\Sigma}+|\sff_{\Sigma}|+H_{\Sigma }^2)u&\leq 0,  \label{lemmaEq}\\ 
    \nabla_{\nu} u+\frac{u}{\sin \rho}[\langle \nabla_{\bar{\nu}} \bar{\nu},\bar{N}\rangle+\nabla_{\bar{\nu}}\rho+\cos \rho \langle \nabla_{\nu}\nu,N \rangle]&=0, \quad \text{along } \p \Sigma\label{lemmaBC}
\end{align}        

The following computation appeared in equation (3.8) in Li's paper \cite{li2020polyhedron} is very helpful, here $\gamma'$ is a unit tangent vector along $\p \Sigma$,  
\begin{equation}\label{Li-capillary}
    -H_{\p V}=\langle \nabla_{\bar{\nu}}\bar{\nu},\bar{N} \rangle +\cos\rho \langle N,\nabla_{\nu}\nu\rangle+\sin\rho\langle \nabla_{\gamma'}\gamma',\nu\rangle.
\end{equation}

Combining equation (\ref{lemmaEq}),(\ref{lemmaBC}),(\ref{Li-capillary}) and $R_V \geq 0$,  we have a smooth $u>0$, 

\begin{align} \label{specPSC}
\begin{split}
	\Delta_{\Sigma}u &\leq \frac{1}{2}R_{\Sigma}u\\ 
    \frac{\nabla_{\nu}u}{u}&=\frac{1}{\sin \rho}(H_{\p V}-\nabla_{\overline{\nu}}\rho)+\langle \nabla_{\gamma'}\gamma',\nu\rangle\geq \frac{\frac{\pi}{d_0}+a_0}{\sin \rho}-\frac{\pi}{d_0}-k_{\p \Sigma}\geq a_0-k_{\p \Sigma}. 
\end{split}
\end{align}

So we can now apply Lemma \ref{DistBound} to get that $d_{\Sigma}(x,\p\Sigma)\leq \frac{2}{a_0}$ for all $x\in \Sigma$.  

Now recall the Gauss Bonnet theorem (for a surface with nonempty boundary $\chi_{\Sigma}\leq 1$),
\begin{align}
    2\pi &\geq 2\pi \chi_{\Sigma}=\int_{\Sigma}\frac{1}{2}R_{\Sigma}+\int_{\p \Sigma} k_{\p \Sigma}=\int_{\Sigma}\frac{1}{2}R_{\Sigma}-\int_{\p \Sigma} \langle \nabla_{\gamma'} \gamma',\nu \rangle, \label{GB}
\end{align}

Combining (\ref{Li-capillary}), (\ref{GB}) and the second variation (\ref{2VarCapi}), we have that for $\phi=1$,
\begin{align}\label{euler-char-bound}
\begin{split}
0&\leq \int_{\Sigma} -\frac{1}{2}(R_V+|\sff_{\Sigma}|^2)+2\pi\chi_{\Sigma}+\int_{\p \Sigma}\frac{1}{\sin \rho}(\nabla_{\bar{\nu}}\rho-H_{\p V}) 
\\&< 2\pi-|\p\Sigma|\cdot(-\frac{\pi}{d_0}+\frac{\pi}{d_0}+a_0)\\
&=2\pi -|\p \Sigma|\cdot a_0,
\end{split}
\end{align} 
note if $\chi_{\Sigma}\leq 0$, we would get a contradiction in (\ref{euler-char-bound}). 
So any $\Sigma$ with nonempty boundary, must be a topological disk. 
We now get $|\p \Sigma| \leq \frac{2\pi}{a_0}$ as desired.
\end{proof}

We can now continue to the proof of Theorem \ref{3d-decomp}. 
\begin{proof}[Proof of Theorem \ref{3d-decomp}]
We  first build one capillary surface $\Sigma=\Sigma_1$ using the same idea in Lemma \ref{3d1bubble} adapted to non-compact manifolds, then we build  $\Sigma_2, \Sigma_3, \Sigma_4 ...$ one by one.  

We consider a minimizer of the following functional  among open sets $\Omega$  with finite perimeter, containing $B^{\p M}_{2}(x_0) $ (for some fixed $x_0 \in \p M$) and contained in $B^M_{20}(x_0)$,
\begin{align*}
    \mathcal{A}(\Omega)=|\p \Omega|+\int_{\p M \cap \Omega} w,
\end{align*} 
here $w:\p M \rightarrow[-1,1]$ is the following Lipschitz function, 
\begin{equation*}
w(x) = \begin{cases}
-1 & x\in B^{\p M}_2(x_0)\\
\cos\rho(x) & -1<w(x)<1 \\
1 & x\in \p M \setminus B^{\p M}_{2\pi}(x_0)
\end{cases}
\end{equation*}

    Here   $\rho(y):\p M \rightarrow \R$ is a smooth function with$|\nabla^{\p M} \rho|\leq 1$. We denote $W_{\pm}:=\{x\in \p M: w(x)=\pm 1\}$ 
    and require the set $\{x\in \p M: \rho(x)=\pi\}=\p W_-$ and the set $\{x\in \p M: \rho(x)=0\}=\p W_+$ to be smooth submanifolds.

We want to apply the same proof of Lemma \ref{3d1bubble} to $B^M_{20}(x_0)$ so we need to perturb the metric of $M$ near $\p B^M_{20}(x_0)$ to get mean convexity. In particular, in a local coordinates near $\p B^M_{20}(x_0)$ one can change the Christoffel symbols (which are first derivatives of the metric) while maintaining the coordinates being orthonormal (which is a condition only depending on the zero-th order derivatives of the metric).
So we can perturb the metric near $\p B^M_{20}(x_0)$ so that the mean curvature is at least 1.5, and the perturbation happens within $M \setminus B^M_{19}(x_0)$. We denote the scalar curvature after perturbation as $\hat{R}_M$, and $\hat{R}_M \rvert_{B_{19}(x_0)}=R_M \rvert_{B_{19}(x_0)}$.  

Then we apply the same minimizing scheme as in Lemma \ref{3d1bubble}, note 
we again have that 
the boundary  of a minimizer must lie in $\{y\in \p M, -1<w(y)<1\}$  and in this region $w(y)$ can be written as $w=\cos(\rho(y))$ for some smooth function $\rho$ with $|\nabla^{\p M} \rho| \leq 1$. 
So we find a smooth minimizer consisting of finitely many capillary surfaces, we now analyze each component $\Sigma$ that has non-empty boundary.


Similar to (\ref{specPSC}) (here put $a_0=1,d_0=\pi, H_{\p M}\geq 2, |\nabla^{\p M}\rho|\leq 1$), using the second variation, we have a smooth function  $u: \Sigma \rightarrow (0,\infty)$, 
\begin{align*}
    \Delta_{\Sigma}u &\leq \frac{1}{2}(R_{\Sigma}-\hat{R}_M)u\\
    \frac{\nabla_{\nu}u}{u}&=\frac{1}{\sin \rho}(H_{\p M}-\nabla_{\overline{\nu}}\rho)+\langle \nabla_{\gamma'}\gamma',\nu\rangle\geq1-k_{\p \Sigma},
\end{align*}
where $\overline{\nu}$ is the outward unit normal of $\p \Sigma  \subset \left(\overline{\Omega} \cap \p M\right)$, $\nu$ is the outward unit normal of $ \p \Sigma\subset \Sigma$.

Using $\hat{R}_M \rvert_{B_{19}(x_0)}=R_M \rvert_{B_{19}(x_0)} \geq 0$
we have, 
$$ \Delta_{\Sigma}u\leq \frac{1}{2}(R_{\Sigma}-\hat{R}_M)u\leq \frac{1}{2}R_{\Sigma}u \quad \text{over the set } B_{19}(x_0).$$  
Using Remark \ref{RMchange} to the set $U:=\{y\in \Sigma,d(y,\p\Sigma)<\frac{2}{1}+3\}\subset B_{19}(x_0)$, we can now apply Lemma \ref{DistBound} to get that $d(x,\p\Sigma)\leq 2$ for all $x\in \Sigma$. So $\Sigma\subset B_{19}(x_0)$, $\hat{R}_M \rvert_{\Sigma}= R_M \rvert_{\Sigma}$, and in the analysis below we just write $R_M$. 

Similar to (\ref{euler-char-bound}) we have,
\begin{align*}
0\leq \int_{\Sigma} -\frac{1}{2}(R_M+|\sff_{\Sigma}|^2)+2\pi\chi_{\Sigma}+\int_{\p \Sigma}\frac{1}{\sin \rho}(\nabla_{\bar{\nu}}\rho-H_{\p M}) \leq 2\pi-|\p\Sigma|,
\end{align*} 
here again $\chi_{\Sigma}\leq 1$ since there is at least one boundary component; if $\chi_{\Sigma}\leq 0$, we would get a contradiction in (\ref{euler-char-bound}). So any $\Sigma$ with nonempty boundary, must be a topological disk, and we have $|\p \Sigma| \leq 2\pi$ as desired.

In total, we have been able to construct finitely many capillary disks, we denote $\Sigma_1$ as the union of all components of $\p \Omega$ that intersect the boundary $\p M$, $Z_1=\Omega \cap \p M$, and each component $\Sigma^{\alpha}_1$ of $\Sigma_1$ have the following, 
\begin{align*}
    B^{\p M}_2(x) \subset Z_1, \quad &\p Z_1 =\p \Sigma_1 \subset B^{\p M}_{2\pi}(x)\setminus B^{\p M}_{\pi}(x),\\ |\p \Sigma^{\alpha}_1|\leq 2\pi, \quad &d(y,\p \Sigma_1)\leq 2 \text{ for all } y \in \Sigma_1.
\end{align*}

For the non-compact boundary $\p M$, to get our desired exhaustion, we can replace the $w$ function above by $w_k$, such that $w_k$  is a mollification of $w_k=-\cos \rho_k$ for $\rho_k$ a mollification of $\tilde{\rho}_k=k\pi-d_{\p M}(x,\cdot)$ on $B^{\p M}_{(k+1)\pi}(x) \setminus B^{\p M}_{k\pi}(x) $, and $|w_k|=1$ elsewhere. For the corresponding $\mathcal{A}_k(\Omega)$, we can obtain a minimizer $\Omega_k$, $\p \Omega_k$ has finitely many components. We write $Z_k=\Omega_k \cap \p M$.  For each component $\Sigma^{\alpha}_k$ of $ \Sigma_k$ that intersects the boundary $\p M$, we have 
\begin{align*}
    B^{\p M}_{(k-1)\pi+2}(x) \subset Z_k, \quad &\p Z_k =\p \Sigma_k \subset B^{\p M}_{(k+1)\pi}(x)\setminus B^{\p M}_{k\pi}(x),\\ |\p \Sigma^{\alpha}_k|\leq 2\pi, \quad &d(y,\p \Sigma_k)\leq 2 \text{ for all } y \in \Sigma_k.
\end{align*}      


Since $B^{\p M}_{(k-1)\pi+2}(x) \subset Z_k$, we have $\cup_k Z_k =\p M$. For any $z \in \p M \setminus Z_k$, any path from $z$ to $x$ must contain a point in $\p \Sigma_k$ by connectedness of $\p M$.  
We therefore obtained an exhaustion of $\p M$ via boundary of capillary surfaces of disk type, with length at most $2\pi$.       

If we know $\pi_1(\p M)=0$, consider $E_k$ an unbounded component of $\p M \setminus Z_k$ such that $E_{k+1} \subset E_k$.    Then as in \cite{chodosh2022complete} Proposition 3.2, for each $k$, $\p E_k$ must be connected, $\overline{E_{k}} \setminus E_{k+1}$ is also connected. 
We have $\sup_k \text{diam}_{\p M}(\overline{E_{k}} \setminus E_{k+1}) \leq 5\pi$. Indeed, consider any two points $z_1, z_2 \in \overline{E_{k}} \setminus E_{k+1}$, then for each $z_i$ there's some $y_i \in \p \Sigma_k$, such that $d_{\p M}(z_i,y_i) \leq 2\pi$. Now  $|\p \Sigma_k|\leq 2\pi$ implies $d_{\p M}(y_1,y_2)\leq \pi$, adding up the length proves the claim. 

\end{proof}

\begin{remark}
    As a concrete example, take any surface $\Sigma$  satisfies Theorem \ref{2d-cpt}, i.e. with nonnegative scalar curvature and strictly convex boundary, then Theorem \ref{3d-decomp} applies to $\Sigma \times \R$. For example if $\Sigma^2$ is a strictly convex spherical cap (see \cite{ros1997stability}), then Theorem \ref{3d-decomp} applies to any small perturbation of $\Sigma^2 \times \R$.   We note that if $M^3$ has nonnegative Ricci curvature and strictly mean convex boundary, it might not split as $\Sigma \times \R$. For example consider capping off one end of a solid cylinder by a half-ball. If $M^3$ has nonnegative Ricci curvature, and strictly positive second fundamental form, then \cite{li2014sharp} has conjectured that it must be compact. 
\end{remark}

\begin{remark}
    In the proof above the bound $d(x,\p \Sigma)\leq 2$ for all $x\in \Sigma$ will depend on mean curvature of $\p M \subset M$ and also on the angle function $w$ prescribed on the boundary.  In \cite{ros1997stability}, the authors showed the disks and spherical caps are capillary stable surfaces of constant mean curvature in $\B^3$  when $w=\cos \theta$ is a constant function over $\p \B^3$.     
\end{remark}

\begin{remark}
    Theorem \ref{3d-decomp} only describes the behavior of $\p M$, and has no control of the interior of $M$. As an example, if $B=\B^3_r(z)$ is a small interior ball in $M^3$ with $R_M \rvert_{B} >0$, then one can concatenate $M\setminus B$ along $\p B \approx \Sph^2$  with $\Sph^2 \times [0, \infty)$ or to any complete (non-compact) manifold $(N,\p N)$ with $  \p N $ homeomorphic to $\Sph^2$, and $R_N \rvert_{\p N} >0$.  In particular, there is no equivalent of Corollary \ref{3d-growth} even if one assumes $M$ is simply connected. Endowing $\Sph^2 \times [0,\infty)$ with the spatial Schwarzschild metric as one interior end of $M$, the volume growth is Euclidean, instead of linear.
\end{remark}

We can now apply the same method to prove the bound for 1-Urysohn width if a simply connected surface can be filled in by a 3-manifold with a metric with  strict mean convexity and nonnegative scalar curvature.

\begin{definition}[1-Urysohn width, \cite{gromov2019four},\cite{guth2017volumes}]
    Let X be a compact metric space, we say that X has Uryson 1-width bounded by $L$, if there is a graph $G$ (a 1 dimensional simplecial complex) and a continuous map $f:X\rightarrow G$, such that every fiber $f^{-1}(y)$ for $y\in G$ has diameter bounded by $L$.
\end{definition}

\begin{proof}[Proof of Theorem \ref{fillin}]
There is some $n\in \N$ so that $nd_0<\text{diam}(\p N)\leq (n+1)d_0$. If $n\leq 1$ then we are done, otherwise let $d_{\p N}(p,q)=\text{diam}(\p N)=nd_0+5\delta$, then  we can obtain functions $(w_l)_{l=1}^{n}: \p N \rightarrow \R$, $w_l(y)=-1$ if $d_{\p N}(p,y)\leq 2\delta+(l-1)d_0$ and $w_l(y)=1$ if $d_{\p N}(p,y)\geq ld_0+3\delta$. 
When $-1<w_l(y)<1$ then $w_l=\cos\rho_l(x)$ for $|\nabla^{\p N} \rho_l(y)|\leq \frac{\pi}{d_0}$ for any $y\in \p N$.

We consider a minimizer of the following functional  among open sets $\Omega$ with finite perimeters, containing $S_l:=B^{\p N}_{2\delta+(l-1)d_0}(p)$ and disjoint from the set $S_l':=\p N \setminus B^{\p N}_{ld_0+3\delta}(p)$,
\begin{align*}
    \mathcal{A}(\Omega)=|\p \Omega|+\int_{\p N \cap \Omega} w_l.
\end{align*} 
Then we get (using the proof of Lemma \ref{3d1bubble}) $\Sigma_l =\p \Omega_l$ capillary surfaces with finitely many disk components $(\Sigma_l^k)_{k=1}^K$  and the boundary  $\p \Sigma_l^k$ of each component $\Sigma_l^k$ must separate $\p N$ into two components using simply connectedness of $\p N$. We  also have $|\p \Sigma_l^k| \leq \frac{2\pi}{a_0}$.  

Now $\p M \setminus \left(\cup_{l=1}^n \Sigma_l\right)$ is a union of finitely many  2-manifolds $\left(\Gamma_l^j\right)_{l=0}^n \subset \left(\overline{\Omega_{l+1}}\setminus \Omega_l \right)$, if $l=0$ we think of $\Omega_l$ as the empty set, if $l=n$ we think of $\Omega_{l+1}$ as $N$. Each component $\Gamma_l^j$ has piecewise smooth boundary, such that $\p \Gamma_l^j \cap \p \Sigma_l$ has at most one component (exactly 1 if $l\geq 1$) by simply connectedness.   Indeed, if not then assume that there are $\p \Sigma_l^1,\p \Sigma_l^2$ two distinct boundary components in $\Gamma_l^j$, then take a fixed point $z \in \Gamma_l^j$ and distance-minimizing paths $l_s$  to $\p \Sigma_l^s$ for $s\in\{1, 2\}$, concatenate $l_1, l_2$ together with a path in $\Omega_l$  transversal to both $l_1, l_2$ (for example by minimize distance to the point $p$), we get a contradiction in terms of intersection number with respect to each of $l_1, l_2$ using simply connectedness.

We show that each $\Gamma_l^j$ has  diameter bounded by $4d_0 +\frac{\pi}{a_0}$. If $l=0$ or $l=n$ this is true by triangle inequality. We now look at $0<l<n$, for any $z_1, z_2 \in \Gamma_l^j$, take the distance minimizing path to $\p \Gamma_l^j \cap \p \Sigma_l$ called  $t_1, t_2$ respectively.   The length of each $t_1, t_2$ is no more than $2d_0$ by construction. We have just shown that $\p \Gamma_l^j \cap \p \Sigma_l$  is a connected curve with length no more than $\frac{2\pi}{a_0}$,  so the distance between any two points on the curve is no more than $\frac{\pi}{a_0}$. Adding these together we get the bound of diameter.

Using $\text{diam}(\p \Sigma_l^k)\leq \frac{\pi}{a_0}$ for each $l,k$, we can find some tubular neighborhood $U_l^k$  of $\p \Sigma_l^k$ in $\p M$ so that $\text{diam}(U_l^k)\leq 4d_0+\frac{\pi}{a_0}$.        

We now define a graph $G$  and a continuous map $l:\p N \rightarrow G$. The graph $G$ has vertices $v_l^j$ and $l(x)=v_l^j$ if $x\in \Gamma_l^j \setminus \left(\cup_{lk}  U_l^k\right)$. We connect two vertices $v_l^j$ and $v_{l+1}^{j'}$ with an edge $E_{lj'}=[0,1]$ if $\Gamma_l^j$ and $ \Gamma_{l+1}^{j'}$ are separated by some $\p \Sigma_{l+1}^k$. For a point $z$ in the  tubular neighborhood $U_{l+1}^k$ homeomorphic to $$  \p \Sigma_{l+1}^k \times [0,1]=\{(y,t),y\in \p \Sigma_{l+1}^k, t \in [0, 1]\},$$ we map by $l(z)=l(y,t)=t\in E_{lj'}$.

One can check that this gives us a continuous map of $\p N$ to a connected graph, the preimage of every point has diameter bounded by $4d_0+\frac{\pi}{a_0}$. Using the definition of Urysohn width we have shown that the 1-Urysohn width of $\p N$ is bounded by $4d_0+\frac{\pi}{a_0}$.  
\end{proof}

\begin{remark}
    In Theorem \ref{fillin} these quantities  $R_g \geq 0, H_{\p N} \geq \frac{\pi}{d_0}+a_0, U_1(\p N) \leq 4d_0+\frac{\pi}{a_0}$ scale accordingly. One can check that the minimum of $$(\frac{\pi}{d_0}+a_0)(4d_0+\frac{\pi}{a_0})=5\pi+\frac{\pi^2}{a_0d_0}+4a_0d_0,$$  is obtained when   $a_0d_0=\frac{\pi}{2}$. So we can restate the theorem with $R_g\geq 0, H_{\p N}\geq 3a_0$, and $U_1(\p N) \leq 3\pi /a_0$.  
\end{remark}

\section{Band Width Estimate}\label{bandwidth}
We now continue with the proof of the band width estimate.

\begin{proof} [Proof of Theorem \ref{bandwidthest}] See Figure 1 (in introduction) as an example. We argue by contradiction.
If $d_{M}(S_+,S_-) > 4+\pi+2\delta$ for some $\delta>0$, then there is a smooth function  $w: \p_0 M \rightarrow \R$, such that $|w(x)|=1$ if $x\in \p_0 M \setminus K$,  $w(x)=\cos \rho(x)$ if $x\in K$, for a compact set $K$ with boundary in $\p M_0$: 
$$ (K,\p_{\pm} K) \Subset K_0 := \left\{x \in \p_0 M, 2+\delta<d_{\p M}(x,S_-)<2+\pi+2\delta\right\},$$ 
and $\rho$ is  a smooth function with
$$\rho\rvert_{\p_- K}=\pi, \rho\rvert_{\p_+ K}=0, |\nabla^{\p M} \rho|<1.$$ 
We may further assume $|w|=1$ on $\p M \setminus K_0$, in our choice we have $w \rvert_{S_-}<-1<0, w \rvert_{S_+}>1>0 $. 

Now we minimize the following functional over open Caccioppoli sets $\Omega$, such that $\p_- M \subset  \Omega, \p_+ M \cap \Omega=\emptyset$: 
\begin{equation}\label{BWfunctional}
    \mathcal{A}(\Omega)=|\p \Omega|+\int_{\p M \cap \Omega} w.
\end{equation}
Using the same argument in section \ref{3d}, we know any minimizing sequence has its boundary contained in the compact set $K$. 
By our convexity assumption, we note that if the minimizing Caccioppoli set has an interior point on $\p_{\pm} M$, then by the maximum principle (\cite{solomon1989strong}), it must be equal to a component of $\p M$ that is minimal in $M$, a contradiction to our assumption.

By regularity of capillary problem \cite{philippis2015regularity} \cite{chodosh2024improved}, we have  a smooth minimizer $\Sigma'= \p \Omega'$.  
 
By the same proof as in Lemma \ref{3d1bubble} (plug in $d_0=\pi,a_0=1$), for each component $\Sigma$ of $\Sigma'$, we have  $d_{\Sigma}(x,\p \Sigma) \leq 2$ for $x\in \Sigma$, this gives us that $d_M(\Sigma, S_{\pm})>0$. So we have that $M_- \subset \Omega'$, $\Omega' \cap M_+ =\emptyset$, $\p \Omega'=\Sigma'\subset M_0$ and we can find a map of degree 1 (mod 2) from a component $\Sigma$ of $\Sigma'$  onto $S_-$ as follows. 

Consider a diffeomorphism $f: M_0 \rightarrow S_- \times [-1,1]$, then for any $x=f^{-1}(z_0,t_0) \in \Sigma'$, we define the projection map $P(x)=z_0$. Consider a regular value  $z \in S_-$ and the path $I_z:= f^{-1}(z,t)$ in $M_0$ ,  we may write $I_z$ just as $(z,t)$ for $t \in [-1,1]$. We look at the characteristic function of $\Omega'$,  then $\chi_{\Omega'}(z,-1)=1$ and $\chi_{\Omega'}(z,1)=0$, we note that $z$ can be in $\p S_-$, and $P^{-1}(\p S_-)=\p \Omega'$, so there is some component $\Sigma$ of $\Sigma'$ such that the degree of $P$ is equal to 1 (mod 2). 
We also note that this means for any $x\in \p S_-$, $P^{-1}(x) \cap \Sigma \neq \emptyset$. If $\p S_-$ has at least two components, then $\chi(\Sigma)\leq 0$, otherwise we have a map of degree 1 (mod 2) from $(\Sigma, \p \Sigma) \rightarrow (S_-,\p S_-)$.      

Apply Kneser's theorem \cite{kneser1930kleinste} to the double of $\Sigma$ and $S_-$, we have $\chi(\Sigma) \leq 0$. Therefore, using the stability inequality (see also \ref{euler-char-bound}) we get, 
\begin{align*}
0\leq& \int_{\Sigma} -\frac{1}{2}(R_M+|\sff_{\Sigma}|^2)+\int_{\p \Sigma}\frac{1}{\sin \rho}(\nabla_{\bar{\nu}}\rho-H_{\p M}) +2\pi\chi_{\Sigma}\\
< & \int_{\p \Sigma} \frac{1}{\sin \rho}(1-2)+2\pi\chi_{\Sigma}<0
\end{align*} 
a contradiction. 
\end{proof}

We first give some examples related to Theorem \ref{bandwidthest}.
\begin{example}
We give an example when $M_0$ is homeomorphic to $  S_- \times [-1,1]$ with $\chi(S_-)> 0$, to show that in Theorem \ref{bandwidthest}, the assumption of $\chi(S_-) \leq 0$ is crucial. Take the cylinder $\D^2 \times [-L,L] \subset \R^3$, with $\D^2$ unit disk in $\R^2$, and cap the top (or bottom) slice $\D^2 \times \{\pm L\}$ with an upper (or lower) hemisphere, then the length of the cylinder is unbounded when we let $L \rightarrow \infty$.     
\end{example}


\begin{remark}
    For $(M,\p M)$ as in Theorem \ref{bandwidthest}, following the proof, we can replace the statement $R_M \geq 0, H_{0} \geq 2, H_{\pm} \geq 0, d_{M}(S_+,S_-) \leq 4+\pi$, with the following:
    if $R_{M_0} \geq 0, H_{0} \geq \frac{\pi}{d_0}+a_0, H_{\pm} \geq 0$, then $  d_{M}(S_+,S_-) \leq d_0+\frac{4}{a_0}$.
\end{remark}

\printbibliography

\end{document}